\newtheorem{theorem}{Theorem}
\newtheorem{corollary}{Corollary}
\newtheorem{proposition}{Proposition}
\newtheorem{lemma}{Lemma}
\title{Counting solutions to invariant equations in dense sets}
\author{Tomasz Kościuszko}
\begin{document}
\maketitle
\begin{abstract}
We prove a lower bound of \(\exp(-C\log^7(2/\alpha))N^{k-1}\) to the number of solutions of an invariant equation in $k$ variables, contained in a set of density~\(\alpha\). Moreover, we give a Behrend-type construction for the same problem with the number of solutions of a convex equation bounded above by \(\exp(-c\log^2(2/\alpha))N^{k-1}\). Furthermore, improving the result of Schoen and Sisask, we show that if a set does not contain any non-trivial solutions to an equation of length \(k\geq 2\cdot 3^{m+1}+2\) for some positive integer $m$, then its size is at most \(\exp(-c\log^{1/(6+\gamma_m)} N)N\), where \(\gamma_m = 2^{1-m}\).
\end{abstract}
\section{Introduction}
Finding structure in dense sets of integers has been a challenge to mathematical research since Van der Waerden  proved his theorem on arithmetic progressions in 1927~\cite{vdw}. Of particular interests have been the quantitative results. We would like to have an upper-bound to the size of a set, which does not contain a certain structure. In the case of Roth's Theorem~\cite{roth}, we consider a set \(A\subseteq \{1,2,\cdots, N\}\) which contains no non-trivial solutions to the equation \[x+y=2z.\]
Roth proved that \[|A|\leq C \frac{N}{\log\log N}.\] After many improvements over the years, a sensational result of Kelley and Meka~\cite{kelley} showed a near-optimal bound \[|A|\leq \exp(-c\log^{1/11}N)N.\]

Other variations of this problem have been considered, for example, since Schoen and Shkredov~\cite{many} and the subsequent work of Schoen and Sisask~\cite{schoen} we know that longer equations like \[x_1+x_2+x_3=3y\] also restrict the size of the subset and even better bounds than the one from Kelley and Meka are known, namely \[|A|\leq \exp(-c\log^{1/7}N)N.\]

In this paper we offer an improvement to the result of Schoen and Sisask~\cite{schoen}, showing that long equations like \[x_1 + x_2 + \cdots + x_{k-1} = (k-1) x_k\] restrict the size of $A$ even more than 4-term equations. Below is a formulation of our result. By a trivial solution there, we mean one where all variables are equal. 
\begin{theorem}\label{thm:strong_no_solutions}
Let \(m\geq 1\) and \(k\geq 2\cdot 3^{m+1}+2\). Let \ \(A\subseteq \{1,2,\cdots, N\}\) be a set which contains no non-trivial solutions to the equation
\[x_1+x_2+\cdots + x_{k-1} = (k-1)x_k.\]
Then \(|A|\leq \exp(-c\log^{1/(6+\gamma_m)} N)N\), where \(\gamma_m = 2^{1-m}\).
\end{theorem}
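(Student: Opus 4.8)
The plan is to deduce the theorem from a supersaturation estimate for this particular equation: \emph{every $A\subseteq\{1,\dots,N\}$ of density $\alpha$ contains at least $\exp(-C\log^{6+\gamma_m}(2/\alpha))\,N^{k-1}$ solutions of $x_1+\cdots+x_{k-1}=(k-1)x_k$ with all $x_i\in A$}, valid whenever $k\ge 2\cdot 3^{m+1}+2$, with $C=C(m)$; the exponent $6+\gamma_1=7$ here matches the general counting bound stated in the abstract. Granting this, the theorem is immediate: a solution is trivial exactly when all $x_i$ are equal, so there are precisely $|A|=\alpha N$ trivial solutions, and if $A$ contains no non-trivial solution then $\exp(-C\log^{6+\gamma_m}(2/\alpha))N^{k-1}\le\alpha N$; taking logarithms and rearranging yields $\log(2/\alpha)\ge c\,(\log N)^{1/(6+\gamma_m)}$ for a suitable $c=c(m)>0$, which is exactly the asserted bound. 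So the whole content is the supersaturation estimate.

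To prove the supersaturation estimate I would run a density-increment argument over Bohr sets in $\mathbb Z/N'\mathbb Z$, with $N'$ a prime in $((k-1)N,2(k-1)N]$ so that honest solutions coincide with solutions modulo $N'$. Expanding the count as $\frac1{N'}\sum_{r}\widehat{1_A}(r)^{k-1}\widehat{1_A}(-(k-1)r)$, the main term ($r=0$) equals $\alpha^k(N')^{k-1}$, which dwarfs $\alpha N$ throughout the relevant range of $\alpha$; hence if the true count is small, the main term must be almost entirely cancelled by the rest. Working within the Kelley--Meka framework, in the Bohr-set form developed by Bloom--Sisask and by Schoen--Sisask, this cancellation is converted---through an unbalancing step, the spectral sifting lemma, and Croot--Sisask almost-periodicity---into a genuine $(1+\Omega(1))$ density increment for $A$ on a Bohr set of controlled rank and radius. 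Here it is crucial that $k-1$ is large: one splits the $k-1$ left-hand summands into blocks and replaces single indicator functions by high convolution powers $1_A^{(*t)}$, and the more copies one can afford, the cheaper the spreading and Bogolyubov-type steps become, which is what keeps the rank of the Bohr set one increments onto from growing too fast. Since the density reaches $\Omega(1)$ after $O(\log(1/\alpha))$ steps, and the final Bohr set is nonempty inside $\{1,\dots,N\}$ only while its rank stays below about $\log N$, the scheme goes through exactly when the accumulated rank, roughly $\log^{6+\gamma_m}(1/\alpha)$, is at most $\log N$, which produces the exponent $1/(6+\gamma_m)$.

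The parameter $m$, and the odd-looking threshold $k\ge 2\cdot 3^{m+1}+2$, arise from a bootstrap. The cost of each density-increment step is governed by a Bogolyubov--Ruzsa-type statement about additive structure inside sumsets of the sparse set $A$, and the quality of that statement can itself be improved by invoking the theorem recursively: with roughly three times as many variables available---so that the budget $k-2$ triples, matching $2\cdot 3^{m+2}=3\cdot(2\cdot 3^{m+1})$---one can feed the case-$m$ count back in to sharpen the sumset input, upgrading the exponent from $1/(6+\gamma_m)$ to $1/(6+\gamma_{m+1})$ with $\gamma_{m+1}=\gamma_m/2$. The base case $m=1$ (requiring $k\ge 20$) recovers the Schoen--Sisask exponent $1/7$, and iterating drives $6+\gamma_m$ down towards $6$. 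The main obstacle, as always in this circle of ideas, is quantitative efficiency: one must implement the sifting and almost-periodicity steps with near-optimal dependence so that a single step costs only about $\log^{5+\gamma_m}(1/\alpha)$ in Bohr-set rank, carry the rank/radius bookkeeping cleanly through the entire iteration, and check that substituting the case-$m$ count into the case-$(m+1)$ argument leaks no further logarithmic factors.
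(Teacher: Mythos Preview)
Your route through a supersaturation estimate with exponent $6+\gamma_m$ is not the paper's argument, and as stated it has a real gap. The paper proves counting (Theorem~\ref{thm:number_of_solutions}) only with exponent $7$, \emph{uniformly in $k$}; the improvement from $7$ to $6+\gamma_m$ is obtained exclusively under the solution-free hypothesis and would not follow from your supersaturation step. Concretely, the paper first shows (Theorem~\ref{thm:bohr_set_in_sumset}) that for any dense $A$ one can find a Bohr set $\Tilde B\subseteq 3^{m+1}A-3^{m+1}A$ whose rank growth is only about $\log^{3+\gamma_m}(2/\alpha)$; then, in Lemma~\ref{lemma:strong_density_increment}, the \emph{absence} of solutions is used in a Schoen--Shkredov/Konyagin style disjointness argument: since $\Tilde B\subseteq wA_3^+ - wA_3^+\subseteq wA_3^+ + wA_3^- - wx$ with $w=\lfloor (k-2)/2\rfloor\ge 3^{m+1}$, for each $y$ at least one of $(y+\Tilde B_{1/2})\cap(k-1)A_1$ and $(y-\Tilde B_{1/2})\cap(-A_2)$ must be empty, which forces a genuine density increment on a translate of $\Tilde B_{1/2}$. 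Iterating this increment gives the exponent $1/(6+\gamma_m)$. None of this passes through a count of solutions, and indeed the disjointness step has no analogue when one merely assumes ``few solutions''.

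Your account of where $m$ and the threshold $2\cdot 3^{m+1}+2$ come from is also off. There is no recursion on the theorem. In Theorem~\ref{thm:bohr_set_in_sumset} one applies Croot--Sisask $m$ times to an inductively built chain $T_0\supseteq T_1\supseteq\cdots\supseteq T_m$ with carefully tuned parameters $k_i=\lceil \log^{1-2^{-i}}(2/\alpha)\rceil$, so that each step manufactures sets with doubling constant $(2/\alpha)^{1/k_{i-1}}$ and the final Chang--Sanders/Croot--Sisask step yields rank about $\log^4(2/\alpha)/k_{m-1}\approx \log^{3+\gamma_m}(2/\alpha)$. The recursion $A'_i\subseteq 2A'_{i-1}+A-A-A'_{i-1}$ is what triples the number of summands at each stage and produces the $3^{m+1}$; the extra ``$+2$'' comes from the two variables $x_k$ and one of the $x_j$ reserved for the disjointness argument in Lemma~\ref{lemma:strong_density_increment}. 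Finally, the paper does not use Kelley--Meka unbalancing or spectral sifting anywhere; the engine is the Bohr-set Croot--Sisask lemma combined with Chang--Sanders (Theorem~\ref{thm:croot_sisask_modified}). If you want to salvage your outline, the honest fix is to drop the supersaturation detour and run the density increment directly for solution-free $A$, which is exactly what the paper does.
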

For the sake of readability, we state Theorem \ref{thm:strong_no_solutions} for the specific equation of length~$k$. Our proof can be generalized to any invariant equation, in a similar way as in Theorem~\ref{thm:number_of_solutions}.  We say that a linear equation \[a_1 x_1 + a_2 x_2 + \cdots +a_k x_k =0\] with coefficients \(a_i \in \mathbb{Z}\) is invariant when \( a_1+a_2+\cdots a_k = 0\).

We also generalize the result of Schoen and Sisask~\cite{schoen} so that we can find an upper-bound not only if there are no non-trivial solutions, but also if their count is abnormally small. Bloom~\cite{bloom} already addressed this problem and he gave a proof of the following theorem.
\begin{theorem} (Bloom) \label{thm:number_of_solutions_bloom}
    Let \(A\subseteq \{1,2,\cdots, N\}\) be such that \(|A|=\alpha N\) and let \(a_1 x_1 + a_2 x_2 + \cdots +a_k x_k =0\) be an invariant equation in \(k\geq 3\) variables. Then for some large constant $C$, there are at least
    \[\exp(-C\alpha^{-1/(k-2)}\log^4(2/\alpha))N^{k-1}\]
    solutions to the equation.
\end{theorem}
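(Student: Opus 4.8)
The plan is to run a Fourier-analytic density-increment argument on Bohr sets, in the spirit of Bloom's work on Roth-type bounds. First I would pass from $\{1,\dots,N\}$ to the cyclic group $\mathbb{Z}/p\mathbb{Z}$ for a prime $p$ with, say, $(\sum_i|a_i|)N < p \le 2(\sum_i|a_i|)N$; this costs only a constant factor (depending on the fixed coefficients $a_i$) in the density, and invariance $\sum_i a_i = 0$ together with the size of $p$ ensures that every solution of $\sum_i a_i x_i \equiv 0 \pmod p$ with all $x_i \in A \subseteq \{1,\dots,N\}$ is a genuine solution over $\mathbb{Z}$. Writing $S$ for the number of solutions, Fourier inversion on $\mathbb{Z}/p\mathbb{Z}$ gives $S = \tfrac1p \sum_{\xi} \prod_{i=1}^k \widehat{1_A}(a_i\xi)$, whose $\xi = 0$ term is the main term $\alpha^k p^{k-1}$ (with $\alpha$ now denoting the density in $\mathbb{Z}/p\mathbb{Z}$).

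The dichotomy is as follows. If $S \ge \tfrac12 \alpha^k p^{k-1}$ we are already done, since $\alpha^k \ge \exp(-C\alpha^{-1/(k-2)}\log^4(2/\alpha))$ for all $\alpha \in (0,1]$ and suitable $C$. Otherwise the off-diagonal part of the Fourier sum must cancel the main term; bounding it by pulling out $k-2$ copies of $M := \max_{\xi \ne 0} |\widehat{1_A}(\xi)|$ and applying Cauchy--Schwarz and Parseval to the remaining two factors --- here one uses that $p$ is prime with $p \nmid a_i$, so that $\xi \mapsto a_i\xi$ permutes $\mathbb{Z}/p\mathbb{Z}$ --- yields $|S - \alpha^k p^{k-1}| \le M^{k-2}\alpha p$. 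Combined with $S \le \tfrac12\alpha^k p^{k-1}$, this forces $M \ge c\,\alpha^{1/(k-2)}|A|$. This is where the exponent $1/(k-2)$, and hence the role of the hypothesis $k \ge 3$, enters.

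A single large Fourier coefficient of this strength yields, by the standard argument, a Bohr set $B$ of rank increased by $1$ and radius shrunk by a bounded factor on which the density of a translate of $A$ has jumped from $\alpha$ to $\alpha(1 + c\,\alpha^{1/(k-2)})$. Iterating the whole argument inside $B$, the density can double at most $O(\log(2/\alpha))$ times, and in the phase where it is $\approx 2^j\alpha$ one needs $O((2^j\alpha)^{-1/(k-2)})$ increment steps, so the process halts after a total of $T = O(\alpha^{-1/(k-2)})$ steps, up to a power of $\log(2/\alpha)$ from the bookkeeping, which is the source of the $\log^4$ factor. Since each step degrades the Bohr set only polynomially in $p$, after $T$ steps one still has a Bohr set of size $p \cdot \exp(-O(T\cdot\mathrm{polylog}(2/\alpha)))$; if $N$, hence $p$, is at least $\exp(C\alpha^{-1/(k-2)}\log^4(2/\alpha))$ the process cannot exhaust the group, so it must terminate in the ``many solutions'' case, and unwinding the passage from Bohr sets back to intervals gives $S \ge \exp(-C\alpha^{-1/(k-2)}\log^4(2/\alpha))N^{k-1}$. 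Finally, if $N < \exp(C\alpha^{-1/(k-2)}\log^4(2/\alpha))$, the claimed bound is below $\alpha N$, so the $\alpha N$ trivial solutions $x_1 = \cdots = x_k$ already suffice after adjusting constants.

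I expect the main obstacle to be the Bohr-set bookkeeping in the iteration: controlling simultaneously the rank growth, radius loss, and resulting size loss of the Bohr sets over $T \approx \alpha^{-1/(k-2)}$ steps so that the cumulative size loss stays of the shape $\exp(-\alpha^{-1/(k-2)}\mathrm{polylog}(2/\alpha))$, and converting solution counts inside a Bohr set into counts of genuine integer solutions --- for instance by locating a long arithmetic progression inside the final Bohr set, or by working relative to Bohr sets throughout (following Bourgain and Sanders). The Fourier--Hölder step and the main-term dichotomy are, by contrast, routine.
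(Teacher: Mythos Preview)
First, note that the paper does not actually prove Theorem~\ref{thm:number_of_solutions_bloom}: it is stated as Bloom's result and referenced to~\cite{bloom}, so there is no in-paper proof to compare against. What follows therefore compares your sketch to Bloom's actual argument (which is, as the title of~\cite{bloom} indicates, an adaptation of Sanders' method).

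There is a genuine gap in your bookkeeping, and it changes the shape of the final bound. Your increment step --- one large Fourier coefficient of relative size $\eta \approx \alpha^{1/(k-2)}$ giving a density gain $\alpha \to \alpha(1+c\eta)$ on a Bohr set whose rank increases by $1$ --- is the classical Roth--Bourgain mechanism. With this mechanism, after $T$ steps the rank is $\approx T$ and the width has shrunk by a factor like $\prod_{i\le T}(c\eta/i)$, so the Bohr set has size at least roughly $\rho_T^{\,d_T}\,p$ with $d_T\approx T$ and $\log(1/\rho_T)\approx T\log(T/\eta)$. This gives a size loss of order $\exp\bigl(-c\,T^2\log(1/\alpha)\bigr)$, \emph{not} $\exp\bigl(-O(T\cdot\mathrm{polylog}(2/\alpha))\bigr)$ as you claim. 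With $T\approx \alpha^{-1/(k-2)}$ your iteration therefore yields only
\[
\exp\bigl(-C\alpha^{-2/(k-2)}\log(2/\alpha)\bigr)N^{k-1},
\]
which is strictly weaker than the stated bound. The sentence ``each step degrades the Bohr set only polynomially in $p$'' hides the point: the degree of that polynomial is the current rank, which itself grows linearly in the number of steps, so the cumulative loss is governed by $T^2$, not $T$.

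What Bloom actually does, following Sanders, is replace the one-frequency increment by a Croot--Sisask/Chang--Sanders step that produces a \emph{constant-factor} density increase (e.g.\ $\alpha\to 2\alpha$) in a single iteration, at the price of adding $O\bigl(\alpha^{-1/(k-2)}\mathrm{polylog}(2/\alpha)\bigr)$ frequencies to the Bohr set. One then needs only $O(\log(2/\alpha))$ iterations, the final rank is $O\bigl(\alpha^{-1/(k-2)}\mathrm{polylog}(2/\alpha)\bigr)$, and the resulting size loss has the correct form $\exp\bigl(-C\alpha^{-1/(k-2)}\log^4(2/\alpha)\bigr)$. The $\log^4$ is exactly the accumulated polylog cost from the almost-periodicity and Chang--Sanders lemmas over $O(\log(2/\alpha))$ rounds. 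So the missing ingredient in your plan is not the bookkeeping you flagged as the obstacle, but the density-increment engine itself: you need Sanders' large increment, not the single-frequency one.
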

Theorem~\ref{thm:number_of_solutions_bloom} is still the best published bound for 3-term equations, however Kelly and Meka~\cite{kelley} recently showed a much more efficient way of counting solutions to equations of length 3. They gave a lower bound of \[\exp(-C\log^{11}(2/\alpha))N^{2}.\] We significantly improve Bloom's bound for the 4-term and longer equations, our bound has a similar shape to the one from Kelley and Meka, more precisely we prove the following theorem.
\begin{theorem} \label{thm:number_of_solutions}
    Let \(A\subseteq \{1,2,\cdots, N\}\) be a set of size \(\alpha N\) and let \(a_1 x_1 + a_2 x_2 + \cdots +a_k x_k =0\) be an invariant equation in \(k\geq 4\) variables. Then for some large constant $C$, there are at least
    \[\exp(-C\log^7(2/\alpha))N^{k-1}\]
    solutions to the equation, that is tuples \((x_1, x_2,\cdots, x_k)\in A^k\), for which\\ \(a_1 x_1 + a_2 x_2 + \cdots +a_k x_k =0\).
\end{theorem}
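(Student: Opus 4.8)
The plan is to establish a quantitative ``few solutions forces a density increment'' dichotomy, in the spirit of Schoen--Sisask~\cite{schoen} and Kelley--Meka~\cite{kelley}, and then to iterate it inside a shrinking family of Bohr sets. The role of the hypothesis $k\geq 4$ is that it lets us put at least two variables on each side of the equation, which makes every application of H\"older's inequality and Cauchy--Schwarz lossless up to polylogarithmic factors; this is precisely what is needed to replace the factor $\alpha^{-1/(k-2)}$ from Theorem~\ref{thm:number_of_solutions_bloom} by a fixed power of $\log(2/\alpha)$.

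\emph{Set-up and the easy branch.} We may assume $\alpha$ is below an absolute constant. Passing from $\{1,\dots,N\}$ to $\mathbb{Z}_p$ for a prime $p$ comparable to $N$ and larger than $N\sum_i|a_i|$, every $\mathbb{Z}_p$-solution with coordinates in $A$ is a genuine solution, so it suffices to bound below the count in $\mathbb{Z}_p$, where $A$ has density at least a constant multiple of $\alpha$ and every $a_i$ is a unit (a zero coefficient just frees a variable and lets us induct on $k$). Normalising so that $\widehat{1_A}(0)=\alpha$, the number of solutions is $T(A)=p^{k-1}\sum_{\theta\in\mathbb{Z}_p}\prod_{i=1}^{k}\widehat{1_A}(a_i\theta)$, with main term $\alpha^k p^{k-1}$ at $\theta=0$. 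If $\sum_{\theta\neq 0}\prod_i|\widehat{1_A}(a_i\theta)|\leq\tfrac12\alpha^k$ then $T(A)\geq\tfrac12\alpha^k p^{k-1}$, which beats the claimed bound comfortably; so we may assume the error term dominates the main term.

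\emph{The structural lemma.} The core is to prove: if $A\subseteq\mathbb{Z}_p$ has density $\alpha$ and $T(A)<\exp(-C\log^{7}(2/\alpha))p^{k-1}$, then there is a Bohr set $B$ of rank $O(\log^{O(1)}(2/\alpha))$ and width at least $\exp(-O(\log^{O(1)}(2/\alpha)))$ on which $A$ has density at least $(1+\kappa)\alpha$, where $\kappa>0$ is an \emph{absolute} constant. Here is where $k\geq 4$ is used: grouping the variables so that at least two lie on each side, the assumption that the error term dominates, after peeling off $k-2\geq 2$ factors as suprema, becomes an honest $L^2$ statement --- a lower bound on the mass of the large spectrum $\{\theta:|\widehat{1_A}(\theta)|\geq\rho\alpha\}$ together with a correlation of $1_A$ with a convolution of three dilates of $A$. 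Feeding this into the Kelley--Meka programme (unbalancing to an $\ell^{2k}$-type estimate, spreading, the sifting and almost-periodicity steps of Croot--Sisask and of Schoen--Sisask, and dependent random choice) yields the \emph{constant-factor} density increment on a Bohr set of polylogarithmic rank. Getting a gain of $(1+\kappa)$ rather than the $(1+\alpha^{1/(k-2)})$ of the classical argument is the whole point, and it is exactly what the extra variables provide.

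\emph{Iteration, conclusion, and the main obstacle.} Assume $T(A)<\exp(-C\log^{7}(2/\alpha))p^{k-1}$. After passing to a sub-Bohr set on which the equation ``closes up'', the pair $(A\cap B,B)$ again satisfies the hypothesis: the density has been multiplied by $(1+\kappa)$, and replacing $p^{k-1}$ by $|B|^{k-1}$ costs only a factor $\exp(O(\log^{O(1)}(2/\alpha)))$ per step, which is absorbed because each density increment decreases $\log^{7}(2/\alpha)$ by at least a constant multiple of $\log^{6}(2/\alpha)$, provided the polylogarithmic exponents in the lemma are kept small enough and $C$ is large. A constant-factor density increase can occur at most $O(\log(2/\alpha))$ times before the density passes $\tfrac12$; at that point the lemma's hypothesis must fail, so the current set already has at least a constant multiple of (size)$^{k-1}$ solutions, and its ambient Bohr set still has measure $\exp(-O(\log^{7}(2/\alpha)))$. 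Tracking everything back gives $T(A)\geq\exp(-O(\log^{7}(2/\alpha)))p^{k-1}$, contradicting the assumption once $C$ exceeds the implied constant; hence the desired lower bound holds. The main obstacle is the structural lemma itself: one must run the almost-periodicity and sifting steps without ever losing a factor $\alpha^{-\Omega(1)}$, and must control the rank and width of the Bohr sets through the whole iteration precisely enough that the accumulated power of $\log(2/\alpha)$ comes out to $7$ and no larger.
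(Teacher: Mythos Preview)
Your high-level strategy---prove a ``few solutions $\Rightarrow$ constant-factor density increment on a polylog-rank Bohr set'' lemma and iterate $O(\log(1/\alpha))$ times---is exactly the paper's skeleton, and your bookkeeping in the iteration paragraph is fine. The gap is that you do not actually prove the structural lemma; you describe a plan (``peel off $k-2$ factors as suprema \dots\ feed this into the Kelley--Meka programme'') and then name the obstacle yourself. But the plan as written does not obviously work: peeling $k-2$ sup-norms off the Fourier sum is precisely the step that produces Bloom's $\alpha^{-1/(k-2)}$ loss in Theorem~\ref{thm:number_of_solutions_bloom}, and the Kelley--Meka machinery (sifting, dependent random choice, spectral boosting) is built for the specific bilinear structure of $x+y=2z$; you give no indication of how to transport it to a general invariant equation with $k\ge 4$ and arbitrary coefficients, and doing so is not a routine exercise.

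The paper's mechanism is different and much more concrete. It never touches Kelley--Meka. Inside a regular Bohr set one first uses Lemma~\ref{lemma:coefficients} to place $a_iA_i$ in carefully chosen dilates, then forms the \emph{popular sums} set $P=\{x: 1_{a_3A_3}\ast\cdots\ast 1_{a_kA_k}(x)\ge Q\}$ and splits on whether $P$ occupies half of the ambient Bohr set. In each branch one applies the Bohr-set version of Croot--Sisask almost-periodicity (Theorem~\ref{thm:croot_sisask_modified}, itself built from Corollary~\ref{cor:croot_sisask} plus Chang--Sanders) with two of the sets playing the roles of $M$ and $L$ so that the ratio $\eta=|M|/|L|$ is $\ge\alpha$; this is where $k\ge4$ is genuinely used, because one needs at least four sets ($A_1,A_2,A_3,A_4$) to run both branches. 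The output is a Bohr set of almost-periods on which Lemma~\ref{lemma:almost_periods_to_increment} gives a $(1+c_k)$ density increment with $d'\ll\log^4(2/\alpha)$. Iterating gives $d_s\ll\log^5(2/\alpha)$ and the final exponent $7$. So: right outline, but the heart of the argument is missing, and the route you gesture at is not the one that is known to succeed here.
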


The new bound can be used to boost results taking advantage of the Fourier Transference Principle~\cite{transference}. For example, it could be applied to the result by Prendiville on solving equations in dense Sidon sets~\cite{prend}. We state an improved result on Sidon sets in the ``Applications" section.

Finally, we give a construction which complements our Theorem \ref{thm:number_of_solutions}. We show a lower bound, similar to a well-known Behrend's construction~\cite{behrend}, where a set has high density, but contains only a few solutions to an invariant equation.

\begin{theorem}\label{thm:behrend_construction}
    Let \(k\geq 4\) and \(\alpha \ll 1\). There exist infinitely many integers \(N\geq 1\) and sets \(A\subseteq \{1,2,\cdots, N\}\) with \(|A|\geq \alpha N\), such that $A$ contains no more than
    \[\exp(-c\log^2(2/\alpha))N^{k-1}\] solutions to the equation \(x_1+\cdots+x_{k-1}=(k-1)x_k\).
\end{theorem}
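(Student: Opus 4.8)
The plan is to take $A$ to be a periodic lift into $\{1,\dots,N\}$ of a Behrend-type set $B$ living in a cyclic group $\mathbb{Z}/M\mathbb{Z}$ whose order $M=M(\alpha,k)$ is a large constant. Concretely, I will first produce a set $B\subseteq\mathbb{Z}/M\mathbb{Z}$ of density at least $2\alpha$ such that the only solutions of $x_1+\cdots+x_{k-1}=(k-1)x_k$ with all variables in $B$ are the trivial ones — where this is meant in $\mathbb{Z}/M\mathbb{Z}$, not just over $\mathbb{Z}$ — and then set $A=\{x\in\{1,\dots,N\}:x\bmod M\in B\}$ for any $N\ge 2M$. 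Since every block of $M$ consecutive integers meets $A$ in exactly $|B|$ points, $|A|\ge|B|\lfloor N/M\rfloor\ge 2\alpha(N-M)\ge\alpha N$. The point of the construction is that every integer solution inside $A$ must be ``vertical'': reducing modulo $M$, the residues $x_1\bmod M,\dots,x_k\bmod M$ lie in $B$ and solve the equation in $\mathbb{Z}/M\mathbb{Z}$, hence are all equal to a single $r\in B$; writing $x_i=r+My_i$ with $0\le y_i\le N/M$, the equation becomes $y_1+\cdots+y_{k-1}=(k-1)y_k$, which has at most $(N/M+1)^{k-1}$ solutions (choose $y_1,\dots,y_{k-2}$ and $y_k$ freely, then $y_{k-1}$ is determined). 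Summing over the at most $M$ admissible residues $r$ bounds the total number of solutions by $M(N/M+1)^{k-1}\le 2^{k-1}N^{k-1}/M^{k-2}$, so everything reduces to making $M$ as large as possible.

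To build $B$ I use the sphere-in-a-box construction, but inside $\mathbb{Z}/M\mathbb{Z}$ rather than in an interval. Fix $n,d\ge 1$, put $b=2kd$ and $M=b^n$, and let $B$ consist of those residues $\sum_{j=0}^{n-1}v_j b^j\bmod M$ whose digit vector $v\in\{0,\dots,d-1\}^n$ lies on a sphere $\|v\|^2=\rho$, where $\rho$ is chosen by pigeonhole over the at most $nd^2+1$ possible values of $\|v\|^2$ so that $|B|\ge d^n/(nd^2+1)$; thus $B$ has density at least $(2k)^{-n}/(nd^2+1)$ in $\mathbb{Z}/M\mathbb{Z}$. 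Now suppose $r_1,\dots,r_k\in B$ have digit vectors $v_1,\dots,v_k$ and satisfy $r_1+\cdots+r_{k-1}\equiv(k-1)r_k\pmod M$. The digitwise defects $c_j=\sum_{i<k}(v_i)_j-(k-1)(v_k)_j$ satisfy $|c_j|\le(k-1)(d-1)<b$, so the carry-free argument — read the congruence mod $b$ to get $c_0=0$, divide by $b$, iterate — forces $c_j=0$ for all $j$, i.e. $v_k=\tfrac1{k-1}\sum_{i<k}v_i$. Strict convexity of $v\mapsto\|v\|^2$ then gives $\|v_k\|^2\le\rho$ with equality only if $v_1=\cdots=v_{k-1}$; since $\|v_k\|^2=\rho$ we conclude $v_1=\cdots=v_k$, hence $r_1=\cdots=r_k$. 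So $B$ has no non-trivial solutions in $\mathbb{Z}/M\mathbb{Z}$, as required.

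It remains to tune $n,d$. Taking $d\approx e^n$ makes $B$ have density $\exp(-\Theta_k(n))$, while $M=(2kd)^n\ge e^{n^2}$. Hence, choosing $n$ as large as the constraint ``density $\ge 2\alpha$'' permits — which is $n\asymp_k\log(1/\alpha)$ — we obtain a modulus with $\log M\ge c_0\log^2(2/\alpha)$ for some $c_0=c_0(k)>0$ and all sufficiently small $\alpha$. Substituting $M\ge\exp(c_0\log^2(2/\alpha))$ into the count above bounds the number of solutions inside $A$ by
\[
2^{k-1}\exp\!\big(-(k-2)c_0\log^2(2/\alpha)\big)\,N^{k-1}\ \le\ \exp\!\big(-c\log^2(2/\alpha)\big)\,N^{k-1}
\]
for a suitable $c=c(k)>0$, once $\alpha$ is small enough (depending on $k$) to swallow the constant $2^{k-1}$. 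This holds for every $N\ge 2M(\alpha,k)$, which is infinitely many $N$, completing the proof.

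The pigeonhole, the carry-free digit bookkeeping, and the final arithmetic are routine; the point that genuinely needs care is that we must forbid non-trivial solutions \emph{modulo $M$}, not merely over $\mathbb{Z}$ — a Behrend set in an interval would fail this after periodic lifting — which is exactly why $b$ is taken much larger than $d$, so that the modular equation collapses to the coordinatewise integer equation and strict convexity can be applied. The other structural feature, the one that pins the exponent at $2$, is the trade-off between wanting $M$ as large as possible (few vertical lifts) and needing density at least $2\alpha$ (which caps $\log M$ at order $\log^2(2/\alpha)$ through the Behrend bound).
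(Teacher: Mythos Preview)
Your proof is correct and follows essentially the same approach as the paper's: both take a Behrend sphere set in a base-$b$ (resp.\ base-$M$) digit space of size $\exp(\Theta_k(\log^2(1/\alpha)))$, extend it to $\{1,\dots,N\}$ by letting the high-order part vary freely, and count solutions by observing that convexity forces the digit/residue part to be constant, leaving an equation in the quotient with $O((N/M)^{k-1})$ solutions per residue. The only cosmetic differences are that you package the extension as a periodic lift modulo $M$ (and accordingly stress that $B$ must avoid nontrivial solutions in $\mathbb{Z}/M\mathbb{Z}$, not just $\mathbb{Z}$) and obtain the conclusion for every $N\ge 2M$, whereas the paper takes $N$ to be a power of its base and uses the ``last $d$ digits'' map in place of reduction mod $M$.
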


We will prove Theorems~\ref{thm:strong_no_solutions} and \ref{thm:number_of_solutions} by treating $A$ as a subset of the group \(\mathbb{Z}/p\mathbb{Z}\) instead of the interval \(\{1,2,\cdots, N\}\). If \(p > (|a_1|+|a_2|+\cdots+|a_n|) N\), then no solutions in the integer case imply no solutions in the \(\mathbb{Z}/p\mathbb{Z}\) case and therefore one version implies the other. 

\section{Notation} 

Let us fix some notation and recall a couple of well-known definitions.
By $c$ and $C$ we mean real, positive constants, where $c$ is sufficiently small and $C$ is sufficiently large for all of our uses. If we wanted to be really precise, we would have to call the constants \(c_1,c_2,\cdots\) and \(C_1, C_2, \cdots\) in various parts of different proofs, however for simplicity we omit the indices.
We work in the group \(\mathbb{Z}/p\mathbb{Z}\), however most of the definitions are given for a general, finite, abelian group $G$ of size $N$ and then applied to \(\mathbb{Z}/p\mathbb{Z}\).
Let $A$ be a subset of $G$. Define a normalized indicator function to be
\[\mu_A = 1_A \cdot \frac{1}{|A|},\]
where \(1_A(x)\) is the function that gives $1$ when \(x\in A\) and $0$ otherwise.\\
By the convolution of two functions \(f,g:G\rightarrow \mathbb{R}\) we mean
\[f*g(x) = \sum_{t\in G} f(t)g(x-t).\]
We sometimes write \(f^{(k)}(x)\) to mean multiple convolutions, that is 
\[f^{(k)}(x) = f*f*\cdots*f(x) \text{ where $f$ appears $k$ times}.\]
Let \(1\leq p < \infty\), the \(L_p\) norm of a function \(f:G\rightarrow \mathbb{R}\) is defined as
\[||f||_p = \Bigl( \sum_{x\in G} |f(x)|^p\Bigr)^{1/p},\]
when \(p=\infty\) we always mean \(||f||_{\infty} = \max_{x\in G}|f(x)|\). 
For a function \(f:G\rightarrow \mathbb{R}\) we define expectation as
\[\mathbb{E}_{x\in G} f(x) = \frac{1}{|G|}\sum_{x\in G}f(x).\]
Denote the group of all characters (homomorphisms) \(\gamma: G\rightarrow \mathbb{C}\) as \(\widehat{G}\). Given a function \(f:G\rightarrow \mathbb{R}\) we define a Fourier coefficient at \(\gamma \in \widehat{G}\) as
\[\widehat{f}(\gamma) = \sum_{x\in G}f(x)\overline{\gamma(x)}.\]
We call the function \(\widehat{f}\) the Fourier Transform of \(f\).
\section{Tools for finding Almost-Periods}
A common technique in Additive Combinatorics is solving a problem in the group \(\mathbb{F}_p^n\), before stating it for a general group or an interval of integers. The advantage of \(\mathbb{F}_p^n\) is that we can make use of subspaces, which are not found in any group. Fortunately, Bohr sets act as approximate subspaces in any finite group $G$. Translating the ideas from the language of subspaces to the language of Bohr sets is usually possible, although quite technical. In our work we immediately present the reasoning by using Bohr sets. The paper by Schoen and Sisask~\cite{schoen} contains simpler proofs of their result in the case of \(\mathbb{F}_p^n\) as well as the general proofs. A reader unfamiliar with Bohr sets, could consider reading that paper as an introduction.

We record a couple of auxiliary definitions and propositions concerning the properties of Bohr sets. For more background on Bohr sets we recommend to the reader a book by Tao and Vu~\cite{taovu}.\\
Let \(0 < \rho \leq 2\) and let \(\Gamma \subseteq \widehat{G}\) for some finite group $G$. The Bohr set \(B=\mathrm{Bohr}(\Gamma, \rho)\) is defined as
\[B = \{x\in G : |1-\gamma(x)|\leq \rho \text{ for all }\gamma\in\Gamma\}.\]
We refer to \(\rho\) as the width and to \(|\Gamma|\) as the dimension of $B$.\\
If \(\delta > 0\) and \(B=\mathrm{Bohr}(\Gamma, \rho)\) we write \(B_{\delta}\) for \(B=\mathrm{Bohr}(\Gamma, \rho\delta)\) and call it a dilate of $B$.
A Bohr set $B$ with dimension $d$ is called regular when
\[1-12d|\delta|\leq \frac{|B_{1+\delta}|}{|B|}\leq 1+12d|\delta|,\]
for every \(|\delta|\leq 1/12d\).
\begin{proposition}\label{prop:bohr_set_convolution}
Let $B$ be a regular Bohr set of dimension $d$ and let \(B'\subseteq B_{\delta}\) where \(\delta \leq \epsilon/24d\), then we have
\[||\mu_B*\mu_{B'} - \mu_{B}||_1 \leq \epsilon.\]
\end{proposition}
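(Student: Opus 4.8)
Let me think about this. We want to show that convolving $\mu_B$ with $\mu_{B'}$ barely moves $\mu_B$ in $L^1$, when $B'$ is supported in a small dilate $B_\delta$.

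The standard approach: write out the $L^1$ norm, use that $\mu_{B'}$ has total mass 1 and is supported on $B_\delta$, so $\mu_B * \mu_{B'}(x)$ is an average of translates $\mu_B(x-t)$ over $t \in B_\delta$. Then
$$\|\mu_B * \mu_{B'} - \mu_B\|_1 = \sum_x \left| \sum_t \mu_{B'}(t)(\mu_B(x-t) - \mu_B(x)) \right| \le \sum_t \mu_{B'}(t) \sum_x |\mu_B(x-t) - \mu_B(x)| = \sum_t \mu_{B'}(t) \|\tau_t \mu_B - \mu_B\|_1.$$
So it suffices to bound $\|\tau_t \mu_B - \mu_B\|_1$ for each $t \in B_\delta$. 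Now $\tau_t \mu_B - \mu_B = \frac{1}{|B|}(1_{B+t} - 1_B)$, and $\|1_{B+t} - 1_B\|_1 = |B \triangle (B+t)| = 2(|B| - |B \cap (B+t)|)$.

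Key claim: for $t \in B_\delta$, $B \cap (B+t) \supseteq$... hmm, actually we want $|B \cap (B+t)|$ close to $|B|$. The point: if $t \in B_\delta$ and $x \in B_{1-\delta}$... wait. Let me reconsider: if $x \in B$ with the property that $x$ is "deep inside" $B$, meaning $x \in B_{1-\delta}$ — no wait, dilates shrink. Let me get the containment right: $x-t$ where... if $\gamma(x)$ satisfies $|1-\gamma(x)| \le \rho(1-\delta)$ and $|1-\gamma(t)| \le \rho\delta$ — hmm, these don't combine by triangle inequality to give $|1 - \gamma(x-t)| \le \rho$ directly because $|1-\gamma(x-t)| = |1 - \gamma(x)\overline{\gamma(t)}| \le |1-\gamma(x)| + |\gamma(x)||1-\overline{\gamma(t)}| = |1-\gamma(x)| + |1-\gamma(t)| \le \rho(1-\delta) + \rho\delta = \rho$. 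Good, so $B_{1-\delta} \subseteq B \cap (B+t)$ for $t \in B_\delta$, hence $|B| - |B\cap(B+t)| \le |B| - |B_{1-\delta}|$.

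Now invoke regularity: $|B_{1-\delta}|/|B| \ge 1 - 12d\delta$, valid when $\delta \le 1/12d$, which holds since $\delta \le \epsilon/24d \le 1/24d$ (assuming $\epsilon \le 2$, or at worst we only need it for $\epsilon$ small). Therefore $|B| - |B_{1-\delta}| \le 12d\delta|B|$, giving $\|\tau_t\mu_B - \mu_B\|_1 \le \frac{2 \cdot 12 d\delta |B|}{|B|} = 24 d\delta \le 24 d \cdot \frac{\epsilon}{24d} = \epsilon$. Plugging back into the averaged bound finishes it since $\sum_t \mu_{B'}(t) = 1$.

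The only real subtlety is making sure the regularity hypothesis is applied with the right parameter ($1-\delta$ rather than $1+\delta$) and that $\delta$ is within the allowed range $1/12d$; both are immediate from $\delta \le \epsilon/24d$. I don't anticipate a genuine obstacle here — this is a standard "almost-invariance of Bohr measures under small translations" argument, and the factor $24$ in the hypothesis is precisely tuned so the constant $12$ from regularity doubles (from the symmetric difference) and cancels.
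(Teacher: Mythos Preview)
Your proof is correct. It differs from the paper's in the decomposition used. The paper inserts the intermediate function $1_{B+B'}$ and bounds
\[
\|\mu_B*\mu_{B'}-\mu_B\|_1 \le \frac{1}{|B|}\bigl(\|1_B*\mu_{B'}-1_{B+B'}\|_1+\|1_{B+B'}-1_B\|_1\bigr),
\]
observing that each piece equals $|(B+B')\setminus B|\le |B_{1+\delta}|-|B|\le 12d\delta\,|B|$; this uses the \emph{upper} half of the regularity inequality. You instead pull the $L^1$ norm inside the average over $t\in B'$ and bound each $\|\tau_t\mu_B-\mu_B\|_1$ via $|B\triangle(B+t)|\le 2(|B|-|B_{1-\delta}|)$, using the \emph{lower} half. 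Both routes are standard and equally short; your translate-by-translate version is perhaps a touch more transparent and works verbatim for any probability measure supported on $B_\delta$, while the paper's intermediate-function trick reaches the same constant with one fewer inequality.
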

\begin{proof}
Using the triangle inequality we notice that
\[||\mu_B*\mu_{B'} - \mu_B||_1 = \frac{1}{|B|}||1_B*\mu_{B'} - 1_B||_1\leq \frac{1}{|B|}\Bigl(||1_B*\mu_{B'}-1_{B+B'}||_1 + ||1_{B+B'}- 1_B||_1\Bigr).\]
Now, by regularity of $B$ we obtain
\[||1_{B+B'}-1_B||_1 = |(B+B')\setminus B| \leq \epsilon|B|/2.\]
Again by regularity of $B$ we have
\[||1_B*\mu_{B'}-1_{B+B'}||_1 = \sum_{x\in B+B'} 1 -\frac{1}{|B|}1_B*1_{B'}(x) = |B+B'| - |B| =\]\[= |(B+B')\setminus B|  \leq \epsilon|B|/2,\]
which ends the proof.
\end{proof}
We will make also use of the following two properties, for the proofs see the book by Tao and Vu~\cite{taovu}.
\begin{proposition}\label{regular_sub_bohr_set}
Let $B$ be a Bohr set. There exists \(\delta \in [\frac{1}{2}, 1]\), such that \(B_{\delta}\) is regular.   
\end{proposition}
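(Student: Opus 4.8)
The plan is to run a pigeonhole argument on the growth of the size function $\rho\mapsto|B_\rho|$, exploiting that it is non-decreasing and that it cannot grow too much while $\rho$ ranges over a bounded interval. Write $B=\mathrm{Bohr}(\Gamma,\rho)$ and $d=|\Gamma|$. The only external input I would invoke is the standard doubling estimate for Bohr sets (the covering argument in Tao and Vu~\cite{taovu}): $|B_{2\lambda}|\le e^{O(d)}|B_\lambda|$, equivalently $|B_\lambda|\ge e^{-O(d)}|B_\mu|$ whenever $\lambda\asymp\mu$ are $\asymp 1$; in particular $\log_2|B_{13/12}|-\log_2|B_{11/24}|\le O(d)$.

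Assume for contradiction that $B_\delta$ is not regular for any $\delta\in[1/2,1]$, and fix such a $\delta$. Unpacking the failure of regularity, there is an $\eta$ with $0<|\eta|\le 1/(12d)$ violating the defining inequality; since $\rho\mapsto|B_\rho|$ is monotone, the violation is automatically one-sided, and in either case (according to the sign of $\eta$) a short reparametrisation produces an interval $I_\delta=[\rho_1,\rho_2]\subseteq[\tfrac{11}{24},\tfrac{13}{12}]$ with $\delta\in I_\delta$, of relative length $\lambda_\delta:=\rho_2/\rho_1-1\in(0,1/(10d)]$, across which $|B_{\rho_2}|>(1+10d\lambda_\delta)\,|B_{\rho_1}|$. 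Taking logarithms and using $\lambda_\delta\asymp|I_\delta|$ (because $\rho_1\asymp 1$) together with $\log_2(1+x)\ge x/2$ on $[0,1]$, this gives $\log_2|B_{\rho_2}|-\log_2|B_{\rho_1}|\ge c\,d\,|I_\delta|$ for an absolute $c>0$.

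Next I would disjointify. The intervals $\{I_\delta:\delta\in[1/2,1]\}$ cover $[1/2,1]$; pass to a minimal finite subcover, in which the intervals have overlap multiplicity at most two (a standard fact about minimal covers of an interval), and select every other one in the left-endpoint ordering to obtain pairwise disjoint intervals $I_1,\dots,I_n\subseteq[\tfrac{11}{24},\tfrac{13}{12}]$ with $\sum_j|I_j|\ge\tfrac14$. Summing the previous estimate over these intervals and using monotonicity of $\rho\mapsto\log_2|B_\rho|$ on the right,
\[
\tfrac{c}{4}\,d\;\le\;\sum_{j=1}^{n}\bigl(\log_2|B_{\rho_2(I_j)}|-\log_2|B_{\rho_1(I_j)}|\bigr)\;\le\;\log_2\frac{|B_{13/12}|}{|B_{11/24}|}\;\le\;O(d),
\]
which is a contradiction once the implied constants are compared. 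Strictly speaking this argument proves Proposition~\ref{regular_sub_bohr_set} with $12d$ replaced by $Kd$ for some absolute constant $K$; obtaining the stated constant $12$ is a matter of a more careful accounting (or simply quoting~\cite{taovu}).

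The step I expect to be the real obstacle is conceptual rather than computational: since $\rho\mapsto|B_\rho|$ is a step function, ``$B_\delta$ regular'' is a statement about \emph{every} scale $0<|\eta|\le 1/(12d)$, not just about the endpoint scale, so one must check that recording a single offending scale at each $\delta$ and then covering really does capture enough — the disjointification is exactly what lets one averaging handle all scales at once — and one must ensure the two-sided reparametrisation loses only an absolute-constant factor, so that the $e^{O(d)}$ coming from the doubling estimate still beats it.
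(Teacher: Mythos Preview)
The paper does not prove this proposition; it simply refers the reader to Tao--Vu. Your pigeonhole-on-growth strategy is the standard one, and is essentially correct, but two points deserve comment.

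First, the finite-subcover step has a small gap. Your $I_\delta$ are \emph{closed} intervals with $\delta$ possibly an endpoint, and a cover of a compact interval by closed intervals need not admit a finite subcover (take $I_\delta=[\delta,(1+\delta)/2]$ for $\delta\in[1/2,1)$ and $I_1=[1,1+\epsilon]$: no finite subfamily covers a left neighbourhood of $1$). The fix is routine: either replace the cover-and-disjointify step by the Hardy--Littlewood maximal inequality applied to the non-decreasing function $\rho\mapsto\log|B_\rho|$ (non-regularity at $\delta$ says exactly that the maximal function of its Stieltjes measure exceeds $cd$ at $\delta$, and the weak-type bound then forces the non-regular set to have measure $O(1)<1/2$), or run a direct dyadic pigeonhole on a partition of $[1/2,1]$ into $\asymp d$ equal pieces, which is how the Tao--Vu argument is usually presented and which sidesteps covering altogether.

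Second, your caveat about the constant is exactly right, and worth stating plainly: the displayed chain $\tfrac{c}{4}\,d\le O(d)$ is not a contradiction as written, since both sides are linear in $d$ with comparable implied constants. The argument therefore really does prove only the version with $12$ replaced by some larger absolute $K$. This is harmless in context---the value $12$ in the paper's definition of regularity is conventional, and every downstream use in the paper (Propositions~\ref{prop:bohr_set_convolution}, Lemma~\ref{lemma:high_density_small_bohr_set}, etc.) is insensitive to it---but it is not ``a matter of more careful accounting'': with the doubling constant one actually has, the inequality simply does not close at $12$.
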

\begin{proposition}\label{bohr_set_size_lower_bound}
Let $B$ be a Bohr set of dimension $d$ and let \(\delta > 0\). Then we have
\[|B_{\delta}|\geq (\delta/2)^{3d}|B|.\]
\end{proposition}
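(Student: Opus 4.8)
The plan is to prove this by a covering/pigeonhole argument after transporting $B$ to the torus. Write $B=\mathrm{Bohr}(\Gamma,\rho)$ with $|\Gamma|=d$, and note that only the case $0<\delta\le 1$ needs an argument, since for $\delta\in[1,2]$ one has $B_{\delta}\supseteq B$ while $(\delta/2)^{3d}\le 1$. First I would record the group homomorphism $\psi\colon G\to(\mathbb{R}/\mathbb{Z})^{d}$, $\psi(x)=(\theta_{\gamma}(x))_{\gamma\in\Gamma}$, where $\gamma(x)=e^{2\pi i\theta_{\gamma}(x)}$. Since $|1-e^{2\pi i t}|=2|\sin\pi t|$, the condition $x\in B$ says precisely that each coordinate $\theta_{\gamma}(x)$ lies in the symmetric arc $I=[-r,r]$ of half-length $r=\frac{1}{\pi}\arcsin(\rho/2)$, and here I would invoke the elementary inequality $\arcsin u\le\frac{\pi}{2}u$ on $[0,1]$ to obtain the crucial bound $r\le\rho/4$. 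Thus $B=\psi^{-1}(I^{d})$.

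Next I would set $L=\lceil\pi/\delta\rceil$, partition $I$ into $L$ consecutive subarcs each of length $2r/L$, and colour every $x\in B$ by the tuple recording which subarc contains $\theta_{\gamma}(x)$ for each $\gamma\in\Gamma$; this is a map from $B$ to a set of size $L^{d}$, so by pigeonhole some colour class $F\subseteq B$ satisfies $|F|\ge|B|/L^{d}$. Fixing $x_{0}\in F$, for every $x\in F$ and every $\gamma$ the points $\theta_{\gamma}(x)$ and $\theta_{\gamma}(x_{0})$ lie in a common subarc, so $\|\theta_{\gamma}(x-x_{0})\|\le 2r/L$ (writing $\|t\|$ for the distance of $t\in\mathbb{R}/\mathbb{Z}$ to $0$, and using that $\psi$ is a homomorphism); hence, by $|1-e^{2\pi i t}|\le 2\pi\|t\|$ and the bounds $r\le\rho/4$, $L\ge\pi/\delta$,
\[
|1-\gamma(x-x_{0})|\le 2\pi\cdot\frac{2r}{L}\le\frac{\pi\rho}{L}\le\rho\delta .
\]
Therefore $x-x_{0}\in\mathrm{Bohr}(\Gamma,\rho\delta)=B_{\delta}$, so $F-x_{0}\subseteq B_{\delta}$ and $|B_{\delta}|\ge|F-x_{0}|=|F|\ge|B|/L^{d}$.

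To finish I would check $L^{d}\le(2/\delta)^{3d}$: since $0<\delta\le 1$ we have $L=\lceil\pi/\delta\rceil\le\pi/\delta+1\le(\pi+1)/\delta\le 8/\delta\le(2/\delta)^{3}$, so that $|B_{\delta}|\ge(\delta/2)^{3d}|B|$, as claimed.

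The only step needing genuine care is the estimate $r\le\rho/4$, together with its dual $|1-e^{2\pi i t}|\le 2\pi\|t\|$: it is exactly this that makes the number of colour cells $L$ depend on $\delta$ alone and not on $\rho$, so that $L^{d}$ is controlled uniformly by $(2/\delta)^{3d}$. The remaining ingredients --- arranging that the colour class sits inside $B$ so that both $x$ and $x_{0}$ obey the arc bound, and the final arithmetic --- are routine, so I do not anticipate a real obstacle here; this is presumably why the statement is merely cited to Tao--Vu.
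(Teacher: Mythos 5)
Your proof is correct, and it is the standard covering/pigeonhole argument for Bohr-set size bounds --- the paper does not prove this statement itself but defers to Tao--Vu, where essentially this argument appears. All the key estimates check out: the convexity bound $\arcsin u\le\frac{\pi}{2}u$ gives $r\le\rho/4$; the pigeonhole on $L^d$ cells with $L=\lceil\pi/\delta\rceil$ produces a class $F$ with $|F|\ge|B|/L^d$ whose difference set lands in $B_\delta$ via $|1-e^{2\pi i t}|\le 2\pi\|t\|$; and the closing arithmetic $L\le(\pi+1)/\delta\le 8/\delta\le(2/\delta)^3$ (valid since $\delta\le 1$) gives exactly the claimed constant $(\delta/2)^{3d}$. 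Your reduction of the remaining range to $\delta\in[1,2]$ via $B_\delta\supseteq B$ together with $(\delta/2)^{3d}\le1$ is also fine; as with the source, the statement is only meaningful for $\rho\delta\le 2$.
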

In the proof of Theorem~\ref{thm:number_of_solutions} we will follow a classical paradigm of finding density increments. The main tool is Croot-Sisask lemma, more precisely its version for Bohr sets proved by the means of Chang-Sanders lemma. Similarly as in~\cite{schoen}, we combine this powerful result with the fact, that for Bohr sets, almost-periods of convolutions and density increments are very closely related. That is depicted by the following lemma.
\begin{lemma} \label{lemma:almost_periods_to_increment}
Let \(\epsilon > 0\), \(f:G\rightarrow \mathbb{R}\) and let \(A\subseteq G\) have the size \(\alpha N\). Suppose that $B$ is a Bohr set, such that for every \(t\in B\)
\[||f*1_A(\cdot + t) - f*1_A||_{\infty} \leq \epsilon \text{ holds.}\]
Further assume that \(||f||_1\leq 1/(2\alpha)\) and \(f*1_A(0)\geq 1-\epsilon\). Then there exists a translate of $A$ (say $x+A$), such that \(B\cap (x+A)\) has density at least \(2\alpha(1-2\epsilon)\) inside $B$.
\end{lemma}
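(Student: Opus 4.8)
The plan is to use the almost-periodicity hypothesis only through its instance at the single point $0\in G$, and then to run a first-moment argument against the $L^1$ bound on $f$.

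First I would propagate the large value of $f*1_A$ at the origin across the whole of $B$. For any $t\in B$, evaluating at $0$ the function whose sup-norm is controlled gives
\[ |f*1_A(t) - f*1_A(0)| \le \|f*1_A(\cdot + t) - f*1_A\|_\infty \le \epsilon, \]
so that, using $f*1_A(0)\ge 1-\epsilon$, we get $f*1_A(t)\ge 1-2\epsilon$ for \emph{every} $t\in B$. Summing this inequality over $t\in B$ and unfolding the convolution while swapping the order of summation,
\[ (1-2\epsilon)|B| \le \sum_{t\in B} f*1_A(t) = \sum_{t\in B}\sum_{s\in G} f(s)\,1_A(t-s) = \sum_{s\in G} f(s)\,|B\cap (s+A)|, \]
where in the last step I used that $\sum_{t\in B} 1_A(t-s) = |\{t\in B : t-s\in A\}| = |B\cap(s+A)|$.

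Finally I would compare the right-hand side with the hypothesis $\|f\|_1\le 1/(2\alpha)$. Since $|B\cap(s+A)|\ge 0$ for all $s$, bounding $f(s)\le |f(s)|$ yields
\[ (1-2\epsilon)|B| \le \sum_{s\in G} |f(s)|\,|B\cap(s+A)| \le \Bigl(\max_{s\in G}|B\cap(s+A)|\Bigr)\,\|f\|_1 \le \frac{1}{2\alpha}\max_{s\in G}|B\cap(s+A)|, \]
so there is some $x\in G$ with $|B\cap(x+A)|\ge 2\alpha(1-2\epsilon)|B|$; that is, the translate $x+A$ is such that $B\cap(x+A)$ has density at least $2\alpha(1-2\epsilon)$ inside $B$, which is the claim. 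The main obstacle here is essentially bookkeeping rather than a genuine difficulty: one has to notice that only the single-point version of the almost-periodicity is needed, to handle the possibly signed function $f$ via $f\le |f|$ together with the non-negativity of $|B\cap(s+A)|$, and to keep the normalizations straight so that the factor $2$ in $\|f\|_1\le 1/(2\alpha)$ becomes exactly the $2\alpha$ appearing in the conclusion. Note in particular that regularity of $B$ is not required, which is consistent with the lemma being stated for an arbitrary Bohr set.
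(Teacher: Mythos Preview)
Your proof is correct and essentially the same as the paper's. The paper packages the averaging over $B$ as a convolution with $\mu_B$, first showing $\|f*1_A*\mu_B - f*1_A\|_\infty \le \epsilon$ and then evaluating at $0$ to get $f*1_A*\mu_B(0)\ge 1-2\epsilon$, and finishes with the H\"older-type bound $f*1_A*\mu_B(0)\le \|f\|_1\,\|1_A*\mu_B\|_\infty$; unwinding $f*1_A*\mu_B(0)=\frac{1}{|B|}\sum_{t\in B} f*1_A(t)$ and $\|1_A*\mu_B\|_\infty=\frac{1}{|B|}\max_s|B\cap(s+A)|$ recovers exactly your explicit summation argument, so the two proofs differ only in notation.
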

\begin{proof}
    We notice that since $B$ is symmetric around 0, we have
    \begin{equation*}
    \begin{split}
    ||f*1_A*\mu_B - f*1_A||_{\infty} &= \bigg|\bigg|\sum_{t\in -B}\Bigl(f*1_A(\cdot-t)\Bigr) \cdot\mu_B(t) - f*1_A\bigg|\bigg|_{\infty} \\
    &\leq \sum_{t\in B} ||f*1_A(\cdot+t) - f*1_A||_{\infty} \cdot \mu_B(t)\\
    &\leq \epsilon
    \end{split}
    \end{equation*}
    By the triangle inequality it follows that \(f*1_A*\mu_B(0) \geq 1-2\epsilon\). We now notice that
    \[||1_A*\mu_B||_{\infty}/(2\alpha) \geq ||f||_1||1_A*\mu_B||_{\infty} \geq f*1_A*\mu_B(0) \geq 1-2\epsilon.\]
    Therefore, for some $x$ we have \(1_A*\mu(x) \geq 2\alpha(1-2\epsilon)\) and we have proved the result as \(1_A*\mu_B(x) = \frac{1}{|B|}|(x+A)\cap B|\).
\end{proof}
Let us now state the two main results we will be using in our proof. They are both stated in~\cite{schoen} (Theorem 2.1 and Proposition 5.3) and the second one is proved in~\cite{chang} (Proposition 4.2).
\begin{theorem} \label{thm:croot_sisask}
(Croot-Sisask) Fix constants \(\epsilon \in (0,1)\), \(k\in \mathbb{N}\) and \( p\geq 2 \). Let \(A, L, S\) be subsets of a finite abelian group and suppose that \(|A+S|\leq K|A|\).
There exists a set \(T\subseteq S\) of size at least  \(|T| \geq 0.99K^{-C p k^2 / \epsilon^2}|S|\), such that for every \(t\in kT-kT\) we have
\[||1_A*1_L(\cdot + t) - 1_A*1_L||_{p}\leq \epsilon |A| |L|^{1/p}.\]
\end{theorem}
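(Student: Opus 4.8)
The plan is to first treat the single-translate case, producing a large $T\subseteq S$ all of whose differences are genuine $L^p$-almost-periods of $1_A*1_L$, and then to pass to $kT-kT$ by the triangle inequality. Throughout put $f:=1_L$, so $\|f\|_\infty\le 1$ and $\|f\|_p=|L|^{1/p}$; write $\tau_x$ for translation by $x$, and note $1_A*1_L=|A|\,\mu_A*f$ with $\mu_A*f=\mathbb{E}_{a\in A}\tau_a f$. Dividing by $|A|$, it suffices to find $T$ with $\|\tau_t(\mu_A*f)-\mu_A*f\|_p\le\epsilon\|f\|_p$ for all $t\in kT-kT$.

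First I would carry out the Croot--Sisask random-sampling step. Fix a parameter $m$ (eventually $m\asymp pk^2/\epsilon^2$) and, for $\vec a=(a_1,\dots,a_m)\in A^m$, set $g_{\vec a}:=\frac{1}{m}\sum_{i=1}^m\tau_{a_i}f$. Then $g_{\vec a}-\mu_A*f=\frac{1}{m}\sum_{i=1}^m(\tau_{a_i}f-\mu_A*f)$ is an average of $m$ i.i.d.\ mean-zero random functions, each of sup-norm at most $2$ and with $\mathbb{E}\,\|\tau_{a_i}f-\mu_A*f\|_2^2\le\|f\|_2^2=|L|$, so the Marcinkiewicz--Zygmund inequality gives
\[
\mathbb{E}_{\vec a\in A^m}\ \|g_{\vec a}-\mu_A*f\|_p^p\ \le\ (Cp/m)^{p/2}\,|L|.
\]
Put $\eta:=(100Cp/m)^{1/2}|L|^{1/p}$ and call $\vec a$ \emph{good} when $\|g_{\vec a}-\mu_A*f\|_p\le\eta$; by Markov's inequality (and $p\ge 2$) the set $\mathcal G\subseteq A^m$ of good tuples has $|\mathcal G|\ge 0.99\,|A|^m$.

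The heart of the proof is a pigeonhole applied to the map $\Psi:\mathcal G\times S\to(A+S)^m$ given by $\Psi(\vec a,s)=\vec a+s\vec 1:=(a_1+s,\dots,a_m+s)$. Here the hypothesis $|A+S|\le K|A|$ enters: the codomain has size at most $K^m|A|^m$ while the domain has size at least $0.99\,|A|^m|S|$, so some fiber $\Psi^{-1}(\vec b_0)$ has size at least $0.99K^{-m}|S|$; since within a fiber $s$ determines $\vec a=\vec b_0-s\vec 1$, the set $T:=\{s\in S:\vec b_0-s\vec 1\in\mathcal G\}$ satisfies $|T|\ge 0.99K^{-m}|S|$. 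Now observe the identity $g_{\vec b_0-s\vec 1}=\tau_{-s}g_{\vec b_0}$, which for $s,s'\in T$ gives the \emph{exact} equality $\tau_{s-s'}g_{\vec a}=g_{\vec a'}$, where $\vec a:=\vec b_0-s\vec 1$ and $\vec a':=\vec b_0-s'\vec 1$ both lie in $\mathcal G$. Using translation invariance of $\|\cdot\|_p$ together with the goodness of $\vec a,\vec a'$,
\[
\|\tau_{s-s'}(\mu_A*f)-\mu_A*f\|_p\le\|\mu_A*f-g_{\vec a}\|_p+\|g_{\vec a'}-\mu_A*f\|_p\le 2\eta
\]
for every $s-s'\in T-T$. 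Finally, for $t\in kT-kT$ write $t=\sum_{i=1}^k(s_i-s_i')$ with $s_i,s_i'\in T$; telescoping and translation invariance give $\|\tau_t(\mu_A*f)-\mu_A*f\|_p\le 2k\eta$. Choosing $m:=\lceil Cpk^2/\epsilon^2\rceil$ makes $2k\eta\le\epsilon\|f\|_p$, and multiplying back by $|A|$ yields the claim with $|T|\ge 0.99K^{-m}|S|\ge 0.99K^{-Cpk^2/\epsilon^2}|S|$.

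I expect the only step needing genuine care to be the sampling bound: pinning down the $\sqrt{p}$-dependence (which is exactly why the exponent reads $pk^2/\epsilon^2$) and invoking the correct form of the Marcinkiewicz--Zygmund / Rudin inequality for Banach-space-valued sums. The single conceptually essential idea is the pigeonhole on $\Psi$: it is what converts the covering hypothesis $|A+S|\le K|A|$ into a set of size $\gg K^{-m}|S|$ of \emph{common} almost-periods, and it relies on colliding shifted tuples producing an \emph{exact} identity $\tau_{s-s'}g_{\vec a}=g_{\vec a'}$, so that no error accumulates beyond the $2\eta$ contributed by goodness.
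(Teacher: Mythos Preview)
The paper does not give its own proof of this theorem: it is quoted as a black box from Schoen--Sisask~\cite{schoen} (their Theorem~2.1), so there is no in-paper argument to compare against. Your write-up is the standard Croot--Sisask proof---random sampling with a Marcinkiewicz--Zygmund/Khintchine moment bound to produce a $0.99$-fraction of ``good'' tuples, then the pigeonhole on $(\vec a,s)\mapsto \vec a+s\vec 1$ into $(A+S)^m$ to extract $T\subseteq S$ of density $\ge 0.99K^{-m}$, followed by the telescoping step to reach $kT-kT$---and it is correct as written. Two cosmetic remarks: since $f=1_L$ and $\mu_A*f$ both take values in $[0,1]$, the centred variables have sup-norm at most $1$ rather than $2$; and in the Markov step it is worth spelling out that with $\eta=(100Cp/m)^{1/2}|L|^{1/p}$ one gets $\Pr(\text{bad})\le 100^{-p/2}\le 0.01$ precisely because $p\ge 2$.
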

The set $kT-kT$ is referred to as Almost Periods of \(1_A*1_L\), because such function does not change by much when shifted by any one element of \(kT-kT\). Our aim is to prove an analogue result, to be used in Lemma~\ref{lemma:almost_periods_to_increment}. For that, we shall construct a Bohr set out of the initial set of almost periods (see later in Theorem~\ref{thm:croot_sisask_modified}).
The next result is Chang's Theorem. We use the version proposed by Sanders. It concerns the large spectrum of \(1_X\) defined as
\[\mathrm{Spec}_{\delta}(1_X) = \{\gamma \in \widehat{G} : |\widehat{1}_X(\gamma)| \geq \delta |X|\}\]
and asserts that it has low dimension, in the sense that we can find a low-dimensional Bohr set on which the values of the characters from \(\mathrm{Spec}_{\delta}(1_X)\) are very close to 1.
\begin{theorem}\label{thm:chang_sanders}
    (Chang-Sanders)
    Fix \(\nu>0\). Suppose that \(B = \mathrm{Bohr}(\Gamma, \rho)\) is regular and set \(d = |\Gamma|\). Let \(X\subseteq B\) be a subset of density \(\sigma\). Then we can find a set of characters \(\Lambda\) and radius \(\rho_2\) such that 
    \[|\Lambda|\leq C \log(2/\sigma)\text{, }\rho_2\geq c \rho \nu \delta^2 /d^2 \log(2/\sigma)\]
    and for every \(\gamma \in \mathrm{Spec}_{\delta}(1_X)\) we have
    \[|1-\gamma(t)|\leq \nu \text{ for all } t\in \mathrm{Bohr}(\Lambda \cup \Gamma, \rho_2).\]
\end{theorem}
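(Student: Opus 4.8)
\emph{Sketch.} The statement is Chang's lemma made relative to the Bohr set $B$: since $X\subseteq B$ and $\sigma=|X|/|B|$ is a density measured \emph{inside} $B$, the whole point is to keep the logarithm in terms of $\sigma$ rather than the absolute density $|X|/N$. The plan is the classical route --- pass from the large spectrum to a small dissociated set via Rudin's inequality, then convert ``captured by a dissociated set'' into ``$\nu$-close to $1$ on a Bohr set'' --- but with Rudin's inequality applied on $B$ rather than on $G$.

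First I would extract a small dissociated core of the spectrum. Recall $\gamma\in\mathrm{Spec}_\delta(1_X)$ means $|\widehat{\mu_X}(\gamma)|\ge\delta$. I would let $\Lambda\subseteq\mathrm{Spec}_\delta(1_X)$ be a maximal subset that is dissociated as a set of characters on $B$ (no nontrivial relation $\prod_{\gamma\in\Lambda}\gamma^{\varepsilon_\gamma}\equiv 1$ on $B$ with $\varepsilon_\gamma\in\{-1,0,1\}$). To bound $|\Lambda|$, I would pair $\mu_X$ with the lacunary sum $f=\sum_{\gamma\in\Lambda}\varepsilon_\gamma\gamma$, the unit scalars chosen so $\overline{\varepsilon_\gamma}\,\widehat{\mu_X}(\gamma)=|\widehat{\mu_X}(\gamma)|$: on one hand $\sum_x\mu_X(x)\overline{f(x)}=\sum_{\gamma\in\Lambda}|\widehat{\mu_X}(\gamma)|\ge\delta|\Lambda|$; on the other, since $\mu_X$ is supported on $B$, Hölder followed by Rudin's inequality \emph{on $B$} --- which holds for characters dissociated on $B$ once one passes to a regular dilate of $B$ so that its uniform measure behaves like Haar measure, using Propositions~\ref{regular_sub_bohr_set} and~\ref{prop:bohr_set_convolution} at the cost of a factor polynomial in $d$ --- would bound the same quantity by $d^{O(1)}\sigma^{-1/p}C\sqrt{p}\,|\Lambda|^{1/2}$ for any $p\ge 2$. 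Optimising $p\asymp\log(2/\sigma)$ gives $|\Lambda|\le C\delta^{-2}\log(2/\sigma)\,d^{O(1)}$, and pushing the $\delta$-dependence into the radius (Sanders' optimisation) would leave $|\Lambda|\le C\log(2/\sigma)$.

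Next I would convert this into the Bohr-set statement. By maximality, every $\gamma\in\mathrm{Spec}_\delta(1_X)$ is a $\{-1,0,1\}$-product of elements of $\Lambda$ (if $\gamma\notin\Lambda$, the relation witnessing that $\Lambda\cup\{\gamma\}$ is not dissociated must involve $\gamma$ with exponent $\pm1$). Hence for $t\in\mathrm{Bohr}(\Lambda\cup\Gamma,\rho_2)$, using $|1-z_1z_2|\le|1-z_1|+|1-z_2|$ and $|1-z^{-1}|=|1-z|$ for $|z|=1$, one gets $|1-\gamma(t)|\le\sum_{\gamma'\in\Lambda}|1-\gamma'(t)|\le|\Lambda|\,\rho_2$, which is $\le\nu$ as soon as $\rho_2\le c\nu/|\Lambda|$. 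Reinstating the $\delta^{2}$, the factor $\rho$, and the $d^{-2}$ shed in the previous step (the width and dimensional losses from regularising $B$) reproduces exactly $\rho_2\ge c\rho\nu\delta^2/(d^2\log(2/\sigma))$; and Proposition~\ref{bohr_set_size_lower_bound}, applied to the $(|\Lambda|+d)$-dimensional Bohr set $\mathrm{Bohr}(\Lambda\cup\Gamma,\rho_2)$, certifies it is nonempty.

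The hard part will be making Rudin's inequality relative: it is elementary on $G$, but on $B$ one must first pass to a regular sub-dilate $B_\eta$ with $\eta\sim 1/d$ so that the uniform measure on $B$ is close enough to translation-invariant for the inequality and the duality of the first step to survive, and keeping track of that passage --- and matching it up with the Bohr set produced in the second step --- is what forces the polynomial-in-$d$ losses and fixes the shape of $\rho_2$. The one point beyond classical Chang is Sanders' reorganisation, which keeps the dimension of $\Lambda$ at $O(\log(2/\sigma))$ by transferring all of the $\delta$-dependence into $\rho_2$; everything after that is routine, if fiddly, Bohr-set bookkeeping.
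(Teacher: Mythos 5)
The paper does not prove this statement: it is imported verbatim from Schoen--Sisask (Proposition~5.3 of~\cite{schoen}), who in turn attribute the proof to Sanders (Proposition~4.2 of~\cite{chang}). So there is no in-paper proof to compare against; what you have written is a blind attempt at Sanders' result.

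Your route is the classical relative Chang argument: extract a maximal set $\Lambda$ dissociated on $B$, bound $|\Lambda|$ by pairing $\mu_X$ against a Riesz-type sum and invoking Rudin's inequality on a regular sub-dilate of $B$, then use maximality to write every $\gamma\in\mathrm{Spec}_\delta(1_X)$ as a $\{-1,0,1\}$-combination of $\Lambda$ and push through the triangle inequality. This correctly yields $|\Lambda|\lesssim\delta^{-2}\log(2/\sigma)$ and, via $\rho_2\sim\nu/|\Lambda|$, the claimed radius $\rho_2\gtrsim\rho\nu\delta^2/(d^2\log(2/\sigma))$. The genuine gap is the dimension bound $|\Lambda|\le C\log(2/\sigma)$ with \emph{no} $\delta^{-2}$ factor. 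You acknowledge this as ``Sanders' reorganisation'' and assert that the $\delta$-dependence can be ``pushed into the radius,'' but you never say how, and the mechanism you do describe is incompatible with doing so: your conversion step, $|1-\gamma(t)|\le\sum_{\gamma'\in\Lambda}|1-\gamma'(t)|\le|\Lambda|\rho_2$, requires $\Lambda$ to be the \emph{full} maximal dissociated set so that every spectral character lies in its span, and shrinking $\Lambda$ below $\sim\delta^{-2}\log(2/\sigma)$ destroys that coverage. In Sanders' proof the dimension reduction comes from a different source (an iterative scheme in which failure to capture the spectrum forces a \emph{constant-factor} density increment for $X$ inside a narrower Bohr set, so the iteration terminates after $O(\log(2/\sigma))$ steps), not from a post-hoc trade of $\delta$-powers between the two parameters. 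Without that input your argument only delivers $|\Lambda|\lesssim\delta^{-2}\log(2/\sigma)$ --- which, to be fair, is all that the present paper needs, since Theorem~\ref{thm:croot_sisask_modified} applies this with $\delta=1/2$ fixed --- but it does not establish the theorem as stated. A secondary point: you should also make sure the $d^{O(1)}$ loss from regularising $B$ lands only in $\rho_2$ and not in $|\Lambda|$; as written your step~1 places it in the dimension bound, which is not where the statement allows it.
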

To prove Theorem~\ref{thm:croot_sisask_modified} that will be used to find almost periods as needed in Lemma~\ref{lemma:almost_periods_to_increment} we first show a corollary to the Croot-Sisask Lemma that will enable us to consider multiple sets instead of just two. In the proof we take advantage of Young's inequality, which we state here.
\begin{theorem}\label{thm:young}(Young's convolution inequality - special case)
Let \(f,g:G\rightarrow \mathbb{R}\) be functions and let \(q\geq 1\), then we have
\[||f*g||_q\leq ||f||_q ||g||_1.\]
\end{theorem}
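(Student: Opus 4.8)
The plan is to deduce this from H\"older's inequality applied pointwise, followed by an interchange of the (finite) order of summation. First I would bound the convolution pointwise by $|f*g(x)| \le \sum_{t\in G} |f(t)|\,|g(x-t)|$, and then, writing $q'$ for the conjugate exponent of $q$ (so $1/q+1/q'=1$), split each summand factor as $|g(x-t)| = |g(x-t)|^{1/q}\cdot|g(x-t)|^{1/q'}$ and apply H\"older to the sum over $t$ with exponents $q$ and $q'$. This yields
\[ |f*g(x)| \le \Bigl(\sum_{t\in G}|f(t)|^q|g(x-t)|\Bigr)^{1/q}\Bigl(\sum_{t\in G}|g(x-t)|\Bigr)^{1/q'} = \Bigl(\sum_{t\in G}|f(t)|^q|g(x-t)|\Bigr)^{1/q}\,\|g\|_1^{1/q'}. \]

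Next I would raise both sides to the $q$-th power, sum over $x\in G$, and swap the two sums (legitimate with no extra justification since $G$ is finite), obtaining
\[ \|f*g\|_q^q \le \|g\|_1^{q/q'}\sum_{t\in G}|f(t)|^q\sum_{x\in G}|g(x-t)| = \|g\|_1^{q/q'}\,\|f\|_q^q\,\|g\|_1 = \|g\|_1^{q}\,\|f\|_q^q, \]
where the final equality uses $q/q'+1 = q(1/q'+1/q) = q$. Taking $q$-th roots then gives the claim. The endpoint $q=1$ (where $q'=\infty$) is immediate from the triangle inequality together with reordering the sums, or can be read off from the computation above under the convention $|g(x-t)|^{1/q'}=1$.

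I do not expect a genuine obstacle here; the only points requiring care are the exponent bookkeeping in the H\"older split, the arithmetic identity $q/q'+1=q$, and the (trivial) remark that all sums over $G$ are finite so the interchange of summation is free. If one wished to avoid splitting into cases for $q=1$, one could instead invoke Minkowski's integral inequality directly, but the H\"older route above is the most self-contained.
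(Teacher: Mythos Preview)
Your argument is correct and is in fact the standard proof of this special case of Young's inequality: the H\"older split $|g(x-t)|=|g(x-t)|^{1/q}|g(x-t)|^{1/q'}$, the exponent identity $q/q'+1=q$, and the interchange of finite sums are all handled properly, and your treatment of the endpoint $q=1$ is fine. The paper itself does not supply a proof of this statement; it is merely recorded as a known tool (a special case of Young's convolution inequality) and invoked in the proof of Corollary~\ref{cor:croot_sisask}. So there is nothing to compare against, but your write-up would serve perfectly well as a self-contained justification.
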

\begin{corollary}\label{cor:croot_sisask}
Fix constants \(\epsilon \in (0,1)\) and \( k\in \mathbb{N}\). Let \(A_1, A_2, \cdots, A_n, M, L, S\) be subsets of a finite abelian group. Suppose that \(|A_1 + S| \leq K|A_1|\) and that \(\eta = |M|/|L| \leq 1\). Then we can find a set \(T\subseteq S\) such that
\[|T| \geq \exp(-Ck^2\epsilon^{-2}\log(2/\eta)\log(2K))|S|\] and for every \(t\in kT-kT\)
\[||1_{A_1}*\cdots *1_{A_n}*1_M*1_L(\cdot + t) - 1_{A_1}*\cdots *1_{A_n}*1_M*1_L||_{\infty} \leq \epsilon |A_1|\cdots|A_n||M|.\]
\end{corollary}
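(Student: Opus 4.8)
The plan is to deduce Corollary~\ref{cor:croot_sisask} from the Croot--Sisask Theorem~\ref{thm:croot_sisask} applied with a clever choice of the two ``free'' sets, together with Young's inequality to absorb the extra convolutions. First I would apply Theorem~\ref{thm:croot_sisask} to the sets $A := A_1$, $L := M$, $S := S$, and the exponent $p := \lceil \log(2/\eta) \rceil$, with a rescaled error parameter $\epsilon' := \epsilon/2$ (or some fixed fraction of $\epsilon$). This produces a set $T \subseteq S$ of size at least $0.99 K^{-Cpk^2/\epsilon'^2}|S| \geq \exp(-Ck^2\epsilon^{-2}\log(2/\eta)\log(2K))|S|$, where I have used $|A_1 + S| \leq K|A_1|$ and folded the $0.99$ and the constants into $C$; this already gives exactly the claimed lower bound on $|T|$. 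For every $t \in kT - kT$ we then have $\|1_{A_1} * 1_M(\cdot + t) - 1_{A_1} * 1_M\|_p \leq \epsilon' |A_1||M|^{1/p}$.

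The next step is to pass from the $L_p$ bound on $1_{A_1} * 1_M$ to an $L_\infty$ bound on the fully convolved function. Write $g := 1_{A_1} * 1_M(\cdot + t) - 1_{A_1} * 1_M$, so $\|g\|_p \leq \epsilon' |A_1||M|^{1/p}$, and observe that the quantity we must control is $\|g * 1_{A_2} * \cdots * 1_{A_n} * 1_L\|_\infty$. Since translation commutes with convolution, this indeed equals $\|1_{A_1}*\cdots*1_{A_n}*1_M*1_L(\cdot + t) - 1_{A_1}*\cdots*1_{A_n}*1_M*1_L\|_\infty$. Now I use that for any functions, $\|h * 1_L\|_\infty \leq \|h\|_{p} \|1_L\|_{p'} = \|h\|_p |L|^{1/p'}$ by H\"older's inequality, where $p'$ is the conjugate exponent; and then $\|g * 1_{A_2} * \cdots * 1_{A_n}\|_p \leq \|g\|_p \|1_{A_2}\|_1 \cdots \|1_{A_n}\|_1 = \|g\|_p |A_2| \cdots |A_n|$ by iterating Young's inequality~\ref{thm:young}. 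Combining, the left-hand side is at most $\epsilon' |A_1||M|^{1/p} \cdot |A_2|\cdots|A_n| \cdot |L|^{1/p'}$.

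It remains to check that $|M|^{1/p} |L|^{1/p'} \leq 2|M|$, so that the final bound becomes $2\epsilon' |A_1|\cdots|A_n||M| = \epsilon |A_1|\cdots|A_n||M|$ with $\epsilon' = \epsilon/2$. Writing $|L|^{1/p'} = |L| \cdot |L|^{-1/p}$, the product is $|M| \cdot (|M|/|L|)^{1/p - 1} \cdot |L|^{\,?}$—let me instead just expand directly: $|M|^{1/p}|L|^{1/p'} = |M|^{1/p} |L|^{1-1/p} = |L| \cdot (|M|/|L|)^{1/p} = |L| \cdot \eta^{1/p}$. Since $p \geq \log(2/\eta)$ we get $\eta^{1/p} = \exp\big(-\tfrac{1}{p}\log(1/\eta)\big) \leq \exp\big(-\log(1/\eta)/\log(2/\eta)\big)$; for $\eta \leq 1$ this is at most, say, $2\eta$ (indeed $\eta^{1/p} \le 2\eta$ is what one wants, and it holds because $\eta \cdot \eta^{1/p - 1} \le \eta \cdot \eta^{-1}\cdot(\text{bounded})$—more carefully, $|L|\eta^{1/p} = |M|\eta^{1/p-1} = |M|\eta^{-(p-1)/p}$, and $\eta^{-(p-1)/p} = (1/\eta)^{(p-1)/p} \le (1/\eta)^{1 - 1/\log(2/\eta)}$, which is at most $2$ since $(1/\eta)^{1/\log(2/\eta)} \ge 1/2 \cdot (2/\eta)^{1/\log(2/\eta)} = e/2 > 1$). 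Hence $|M|^{1/p}|L|^{1/p'} \le 2|M|$ as needed. The main obstacle I anticipate is precisely this last bookkeeping: choosing $p$ large enough (as a function of $\eta$) that the $\eta^{1/p}$ factor is absorbed, while keeping $p$ small enough that the exponent $p k^2 \epsilon^{-2}\log K$ in $|T|$ matches the stated $k^2\epsilon^{-2}\log(2/\eta)\log(2K)$; taking $p = \lceil \log(2/\eta)\rceil$ threads this needle, and one should double-check the edge case $\eta$ close to $1$ (where $p = 1$ or $2$) causes no trouble since then the conclusion is weak anyway.
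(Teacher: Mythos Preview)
Your overall strategy matches the paper's---apply Croot--Sisask with exponent about $\log(2/\eta)$, then H\"older plus Young to absorb the extra convolutions---but the final bookkeeping is wrong. You claim $|M|^{1/p}|L|^{1/p'} \le 2|M|$, equivalently $(1/\eta)^{(p-1)/p} \le 2$; with $p = \lceil\log(2/\eta)\rceil$ the exponent $(p-1)/p$ is close to $1$, so the left side is of order $1/\eta$, not bounded. (Take $\eta = 10^{-2}$, $p=6$: one gets $100^{5/6}\approx 46$, not $\le 2$.) The sentence arguing ``$(1/\eta)^{1/\log(2/\eta)} \ge e/2$, hence $(1/\eta)^{1-1/\log(2/\eta)} \le 2$'' is a non sequitur: a lower bound on the denominator of size $\approx 1$ cannot kill a numerator of size $1/\eta$.

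The point is that the \emph{large} set $L$ must sit in the slot carrying the small exponent $1/p$, and the \emph{small} set $M$ in the slot carrying the exponent $1/p'\approx 1$; you have them the wrong way round. The cleanest fix is to apply Theorem~\ref{thm:croot_sisask} to the pair $(A_1, L)$ rather than $(A_1, M)$ (the hypothesis $|A_1+S|\le K|A_1|$ and hence the lower bound on $|T|$ are unaffected), and then H\"older against $\|1_{A_2}*\cdots*1_{A_n}*1_M\|_{p'}$. The bound becomes
\[
\epsilon'|A_1|\,|L|^{1/p}\cdot|A_2|\cdots|A_n|\cdot|M|^{1/p'} \;=\; \epsilon'|A_1|\cdots|A_n|\,|M|\,(1/\eta)^{1/p},
\]
and now $(1/\eta)^{1/\log(2/\eta)} \le e$, which is exactly the estimate $\tfrac{1}{3}(1/\eta)^{1/\log(2/\eta)}\le 1$ that the paper records in its last line. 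In the paper's write-up the H\"older split is arranged so that $L$ carries the exponent $1/p$ and $M$ the conjugate exponent, which is why their arithmetic closes; your version swapped these roles.
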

\begin{proof}
Let \(f = 1_{A_2}*1_{A_3}*\cdots *1_{A_n}\). By writing the definition of convolution and using Holder's inequality we see that for any \(t\in G\) and for any \(p,q\geq 1\) such that \(\frac{1}{p}+\frac{1}{q}=1\) we have
\[||1_{A_1}*1_{M}*f*1_{L}(\cdot + t) - 1_{A_1}*1_{M}*f*1_{L}||_{\infty} \leq ||1_{A_1}*1_{M}(\cdot + t) - 1_{A_1}*1_M||_q ||f*1_L||_p.\]
Let us set \(p=\log(2/\eta)\). Using Theorem~\ref{thm:croot_sisask} with \(\epsilon/3\) we get a set $T$ of desired size such that for any 
\(t\in T\) there is
\begin{equation*}
\begin{split}
    ||1_{A_1}*1_{M}(\cdot + t) - 1_{A_1}*1_M||_q ||f*1_L||_p &\leq \frac{1}{3}\epsilon |A_1| |M|^{1/q} ||f*1_L||_p\\
    &\leq \frac{1}{3}\epsilon|A_1||A_2|\cdots |A_n| |M|^{1/q} |L|^{1/p}\\
    &=\frac{1}{3}\epsilon|A_1||A_2|\cdots |A_n| |M|(|L|/|M|)^{1/p},
\end{split}
\end{equation*}
where the second inequality follows by applying Theorem~\ref{thm:young}. The Corollary is proved because for our choice of $p$ we have
\[\frac{1}{3}(|L|/|M|)^{1/p} = \frac{1}{3}(1/\eta)^{1/\log(2/\eta)} \leq 1.\]
\end{proof}
We can now prove the version of the Croot-Sisask lemma that will allow us to find almost periods which form a large Bohr set. This is a version of Theorem 5.4 from the paper of Schoen and Sisask~\cite{schoen}.
\begin{theorem}\label{thm:croot_sisask_modified}
    Fix \(\epsilon \in (0,1)\). Let \(A_1, A_2, \cdots, A_n, M, L\) be subsets of $G$. Let $B$ be a regular Bohr set of dimension $d$ and width \(\rho\). Suppose that there exists \(S\subseteq B\), such that \(|A_1+S|\leq K|A_1|\). Denote the density of $S$ in $B$ as \(\sigma\). Moreover, suppose that \(\sigma > 0\) and \(\eta = |M|/|L| \leq 1\). Then there exists a Bohr set \(B'\subset B\) with the property that for every \(t\in B'\) we have
    \[||1_{A_1}*\cdots *1_{A_n}*1_L*1_M(\cdot + t) - 1_{A_1}*\cdots *1_{A_n}*1_L*1_M||_{\infty} \leq \epsilon |A_1|\cdots|A_n||M|.\]
    Furthermore, $B'$ can be taken to have width at least \(\rho\epsilon\eta^{1/2}/(d^2d')\) and dimension at most $d+d'$ where
    \[d'\ll \epsilon^{-2}\log^2(2/\epsilon\eta)\log(2/\eta)\log(2K)+\log(1/\sigma).\]
    
\end{theorem}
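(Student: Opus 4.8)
The plan is to combine Corollary \ref{cor:croot_sisask} with the Chang–Sanders theorem (Theorem \ref{thm:chang_sanders}) in order to upgrade the raw set of almost periods $kT-kT$ into a genuine (low-dimensional, wide) Bohr set. First I would apply Corollary \ref{cor:croot_sisask} with the given sets $A_1,\dots,A_n,M,L$, with parameter $\epsilon$ and with a suitable integer $k$ to be chosen at the end (think $k \approx \log(2/\epsilon\eta)$), obtaining a set $T \subseteq S$ with $|T| \geq \exp(-Ck^2\epsilon^{-2}\log(2/\eta)\log(2K))|S|$ such that $1_{A_1}*\cdots*1_{A_n}*1_L*1_M$ is $\epsilon|A_1|\cdots|A_n||M|$-almost-invariant under every shift in $kT-kT$. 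Since $T \subseteq S \subseteq B$, the set $T$ sits inside the regular Bohr set $B$, and its density inside $B$ is $\sigma' := |T|/|B| \geq \sigma \cdot \exp(-Ck^2\epsilon^{-2}\log(2/\eta)\log(2K))$, so $\log(2/\sigma') \ll k^2\epsilon^{-2}\log(2/\eta)\log(2K) + \log(1/\sigma)$.

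Next, the key point is that almost periods of $1_T*1_T$ (equivalently, elements on which $|\widehat{1_T}|^2$ is not spread out) are controlled by the large spectrum $\mathrm{Spec}_\delta(1_T)$: a standard Fourier/Bogolyubov-type computation shows that if $t$ satisfies $|1-\gamma(t)| \leq \nu$ for all $\gamma \in \mathrm{Spec}_\delta(1_T)$, then $1_T*1_T(\cdot+t)$ is close to $1_T*1_T$, and by iterating $k$ times one gets that such $t$ lies essentially in $kT-kT$ (after passing to a small dilate, via regularity of $B$, to absorb boundary losses). More precisely I would take $X = T$, density $\sigma = \sigma'$ in Theorem \ref{thm:chang_sanders}, choose $\delta$ and $\nu$ as small negative powers of $\epsilon$ and $k$, and produce characters $\Lambda$ with $|\Lambda| \leq C\log(2/\sigma')$ and radius $\rho_2 \geq c\rho\nu\delta^2/(d^2\log(2/\sigma'))$ so that $\mathrm{Bohr}(\Lambda\cup\Gamma,\rho_2) \subseteq kT-kT$. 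Setting $B' = \mathrm{Bohr}(\Lambda\cup\Gamma,\rho_2)$ (or a regular dilate of it) then has dimension at most $d + d'$ with $d' = |\Lambda| \ll \log(2/\sigma') \ll \epsilon^{-2}\log^2(2/\epsilon\eta)\log(2/\eta)\log(2K) + \log(1/\sigma)$ after substituting $k \approx \log(2/\epsilon\eta)$, and width at least $\rho_2 \gg \rho\epsilon\eta^{1/2}/(d^2 d')$ for the appropriate choices of $\nu \approx \epsilon\eta^{1/2}$ and $\delta$ a constant power of $\epsilon$.

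The main obstacle I anticipate is bookkeeping the parameter dependencies so that the final width and dimension come out in precisely the claimed form — in particular tracking how the $\eta^{1/2}$ factor (rather than $\eta$) emerges, which forces the choice $\nu \sim \epsilon\eta^{1/2}$ and $\delta \sim \epsilon$, and checking that the $k$ iterations of the spectral argument only cost constants in the exponents. A secondary technical point is the repeated appeal to regularity of $B$: each time we intersect with a small dilate or translate we lose a boundary term of size $O(d\delta)|B|$, and we need $B'$ to still have density bounded below, which is exactly what Proposition \ref{bohr_set_size_lower_bound} guarantees since $d'$ is only logarithmic in the relevant quantities. I would organize the proof so that the Fourier computation relating $\mathrm{Spec}_\delta(1_T)$ to almost periods is isolated as the conceptual heart, and the parameter optimization is deferred to the end where $k$ is chosen to balance the two contributions to $d'$.
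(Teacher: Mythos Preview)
Your high-level plan is right --- apply Corollary~\ref{cor:croot_sisask} to produce $T$, then feed the spectral information of $T$ into Chang--Sanders to manufacture a Bohr set --- but the mechanism you describe for linking $B'$ back to the almost periods of $g := 1_{A_1}*\cdots*1_{A_n}*1_L*1_M$ is not the one the paper uses, and as sketched it has a gap.

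You propose to establish the \emph{containment} $B' \subseteq kT - kT$ via a Bogolyubov-type positivity argument on iterated convolutions of $1_T$. The paper never proves such a containment. Instead it inserts an intermediate smoothing: fixing $z\in T$ and setting $X = T - z$, one uses the Croot--Sisask output to show $\|g - g*\mu_X^{(k)}\|_\infty \le \tfrac{\epsilon}{4}|A_1|\cdots|A_n||M|$, since every point of $kX \subseteq kT - kT$ is already an almost period of $g$. It then suffices to show that $B'$ consists of almost periods of the \emph{smoothed} function $g*\mu_X^{(k)}$, and this is a direct Fourier computation: for $t\in B'$ one bounds $\mathbb{E}_\gamma |\widehat{g}(\gamma)|\,|\widehat{\mu_X}(\gamma)|^k\,|1-\gamma(t)|$. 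The factor $|\widehat{\mu_X}|^k|1-\gamma(t)|$ is at most $\tfrac{\epsilon}{4}\eta^{1/2}$ uniformly in $\gamma$ (using $\delta^k$ off the spectrum and $|1-\gamma(t)|\le\nu$ on it), while the remaining $\mathbb{E}_\gamma|\widehat{g}(\gamma)|$ is handled by pulling out $|A_1|\cdots|A_n|$ trivially and then applying Cauchy--Schwarz and Parseval to $|\widehat{1_M}||\widehat{1_L}|$, yielding $|M|^{1/2}|L|^{1/2}$. This Cauchy--Schwarz step is exactly where the $\eta^{1/2}$ arises: $\nu=\tfrac{\epsilon}{4}\eta^{1/2}$ is chosen precisely to cancel the factor $(|L|/|M|)^{1/2}=\eta^{-1/2}$ that appears here.

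Your route, by contrast, works only with $1_T$ and never touches $\widehat{g}$, so there is no natural source for the $\eta^{1/2}$; your choice $\nu \approx \epsilon\eta^{1/2}$ looks reverse-engineered from the answer rather than forced by your argument. More seriously, a Bogolyubov positivity argument for $B'\subseteq kT-kT$ (or even $2T-2T$) would need $\delta^{O(k)}$ to beat the local density $\sigma'$ of $T$, but $\log(1/\sigma')$ already contains a $k^2$ term from Croot--Sisask, creating a circular dependence between $k$ and $\sigma'$ that your sketch does not resolve. Finally, note that in the paper $\delta = 1/2$ is a fixed constant, not a power of $\epsilon$ as you suggest; all of the $\epsilon$- and $\eta$-dependence is carried by $k \asymp \log(2/\epsilon\eta)$ and $\nu = \tfrac{\epsilon}{4}\eta^{1/2}$, and with $\delta$ constant the width bound $\rho_2 \gg \rho\nu\delta^2/(d^2 d')$ comes out as stated.
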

    \begin{proof}
        To simplify the notation, let us write
        \[g = 1_{A_1}*\cdots *1_{A_n}*1_L*1_M(\cdot + t)\]
        We start by applying Corollary~\ref{cor:croot_sisask} to obtain a set of almost periods $T$, with \(|T|\geq \exp(-Ck^2\epsilon^{-2}\log(2/\eta)\log(2K))|S|\), such that for every \(t\in kT-kT\) we have
        \[||g(\cdot + t) - g||_{\infty} \leq \epsilon_2 |A_1|\cdots|A_n||M|,\tag{1}\]
        Where \(\epsilon_2=\epsilon/4\).
        We can notice that although not a Bohr set, our set of almost periods \(kT-kT\) is already highly structured, especially if $k$ is large. We will make use of this observation to find a large Bohr set \(B'\) which is ``close" to \(kT-kT\).
        We will approach this problem from the perspective of Fourier Analysis, which will allow us to use Theorem~\ref{thm:chang_sanders}. For that let us fix any \(z\in T\) and set \(X=T-z\). Then clearly the inequality holds for any \(t\in k X\) as \(kX = kT-kz \subseteq kT - kT\). We notice that the following functions can be approximated by one another.
        \begin{equation*}
        \begin{split}
        ||g-g*\mu_X^{(k)}||_{\infty} &= \bigg|\bigg|\sum_{(x_1,\cdots,x_k)\in X^k}(g(\cdot)-g(\cdot -x_1-\cdots -x_k))\mu_X(x_1)\cdots\mu_X(x_k)\bigg|\bigg|_{\infty}\\
        &\leq \sum_{(x_1,\cdots,x_k)\in X^k} ||g(\cdot)-g(\cdot -x_1-\cdots -x_k)||_{\infty}\mu_X(x_1)\cdots\mu_X(x_k)\\
        &\leq \epsilon_2 |A_1|\cdots|A_n||M|,           
        \end{split}
        \end{equation*}
        where the last inequality follows by (1) upon noticing that \(x_1+x_2+\cdots+ x_k\in kT-kT\). Therefore if we manage to find a Bohr set \(B'\), such that for every \(t\in B'\) there is
        \[||g*\mu_X^{(k)}(\cdot + t) - g*\mu_X^{(k)}||_{\infty} \leq \epsilon_2 |A_1|\cdots|A_n||M| \tag{2}\]
        we can use the triangle inequality to finish the proof.\\
        Noticing that the conclusion of Theorem~\ref{thm:chang_sanders} remains true if we consider translates of Bohr sets, we apply it to $X$ as a subset of $B-z$. We show that the Bohr set produced is sufficient as $B'$. First of all let us check that its dimension and radius is good enough. We know that the density of $X$ in $B-z$ is
        \[\sigma_x = \exp(-Ck^2\epsilon_2^{-2}\log(2/\eta)\log(2K))\sigma .\]
        Substituting it in Theorem~\ref{thm:chang_sanders} we obtain a set of characters \(\Lambda\) with
        \[d'=|\Lambda|\leq Ck^2\epsilon_2^{-2}\log(2/\eta)\log{2K}+\log(1/\sigma)\]
        and \(\rho_2\) such that
        \[\rho_2\geq \frac{c\rho\nu\delta^2}{Cd^2 k^2\epsilon_2^{-2}\log(2/\eta)\log{2K}+\log(1/\sigma)}.\]
        We set \(k = C \lceil \log(2/\epsilon_2\eta)\rceil\), \(\delta=1/2\), \(\nu=\epsilon_2\eta^{1/2}\). With such setup the claimed constraints on $B'$ are satisfied.
        Let us verify that \(\mathrm{Bohr}(\Lambda \cup \Gamma, \rho_2)\) indeed contains almost periods of the relevant function. Fix its element $t$ and an element of the group $x$.
        Let us consider aforementioned inequality and write it by the Fourier inversion formula as
        \begin{equation*}
        \begin{split}
        \bigl|g*\mu_X^{(k)}(x + t) - g*\mu_X^{(k)}\bigr| &= \bigl|\mathbb{E}_{\gamma \in \widehat{G}}\widehat{g}(\gamma)\widehat{\mu}_X(\gamma)^k(\gamma (x+t) - \gamma(x))\bigr|\\
        &\leq \mathbb{E}_{\gamma \in \widehat{G}}|\widehat{g}(\gamma)||\widehat{\mu}_X(\gamma)|^k |1 - \gamma(t)|. 
        \end{split}
        \end{equation*}
        At this point let us consider two cases: either \(\gamma\in \mathrm{Spec}_\delta(1_X)\) or not. For the second case, we have by the definition of \(\mathrm{Spec}_\delta(1_X)\) that \[|\widehat{\mu}_X(\gamma)|^k = \widehat{1}_X(\gamma)^k/|X|^k \leq \delta ^ k \leq \Bigl(\frac{1}{2}\Bigr)^k\leq \epsilon_2\eta^{1/2},\] provided the constant $C$ is sufficiently large. Otherwise, for every \(t\in B'\) we have by Theorem~\ref{thm:chang_sanders} that \(|1-\gamma(t)|\leq \nu = \epsilon_2\eta^{1/2}\).
        Either way we have
        \[|\widehat{\mu}_X(\gamma)|^k|1-\gamma(t)|\leq \epsilon_2\eta^{1/2}.\]
        Thus the expectation can be estimated as
        \begin{equation*}
        \begin{split}
        \mathbb{E}_{\gamma \in \widehat{G}}|\widehat{g}(\gamma)||\widehat{\mu_X}(\gamma)|^k |1 - \gamma(t)| &\leq \epsilon_2\eta^{1/2} \cdot \mathbb{E}_{\gamma \in \widehat{G}}|\widehat{g}(\gamma)|\\
        &= \epsilon_2\eta^{1/2} \cdot \mathbb{E}_{\gamma \in \widehat{G}}|\widehat{1}_{A_1}(\gamma)|\cdots |\widehat{1}_{A_n}(\gamma)| |\widehat{1}_{L}(\gamma)| |\widehat{1}_{M}(\gamma)|\\
        &\leq \epsilon_2\eta^{1/2} |A_1|\cdots |A_n|\cdot\mathbb{E}_{\gamma \in \widehat{G}}|\widehat{1}_{M}(\gamma)| |\widehat{1}_{L}(\gamma)| \\
        &\leq  \epsilon_2\eta^{1/2} |A_1|\cdots |A_n||M|^{1/2}|L|^{1/2} \\
        &\leq\epsilon_2 |A_1|\cdots |A_n||M|,
        \end{split}
        \end{equation*}
        as required. We proved (2) for every \(t\in B'\), from which the theorem follows by the triangle inequality as shown before.
    \end{proof}
    \section{Obtaining a density increment}
    So far we have shown in Lemma~\ref{lemma:almost_periods_to_increment}, that a large Bohr set of almost periods lets us find a density increment inside of this Bohr set (upon translating the original set). We also know how to find such Bohr set using Theorem~\ref{thm:croot_sisask_modified}. It remains to show how to proceed so that the assumptions of Theorem~\ref{thm:croot_sisask_modified} are satisfied. This will require some non-trivial manipulations on Bohr sets. Let \(A\subseteq B\) be a set which contains few solutions to our fixed invariant equation and let $B$ be a regular Bohr set. We would like to dilate $B$ by some factor \(\delta\) and still be able to find a tanslate of $A$, which has high density within \(B_{\delta}\). The following lemma tells us that such translate can be found.
    
    \begin{lemma}\label{lemma:high_density_small_bohr_set}
        Let \(A\subseteq B\) with \(|A|=\alpha |B|\), where $B$ is a regular Bohr set of dimension $d$ and radius \(\rho\). Suppose that \(\delta\leq \frac{\alpha}{240d}\) is a constant, such that \(|B_{1+\delta}|\leq 1.01 |B|\).
        There exists \(x\in G\) such that \(|A\cap (x+B_{\delta})|\geq 0.9\alpha |B_{\delta}|\).
    \end{lemma}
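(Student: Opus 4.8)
The plan is a short averaging/pigeonhole argument built around the identity
\[
1_A * \mu_{B_\delta}(x) = \frac{1}{|B_\delta|}\,|A\cap (x+B_\delta)| \qquad (x\in G),
\]
which is valid because $B_\delta$ is symmetric about $0$. Thus it suffices to find an $x$ with $1_A*\mu_{B_\delta}(x)\ge 0.9\alpha$.

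First I would record the total mass of this nonnegative function: since convolution multiplies $\ell^1$ masses,
\[
\sum_{x\in G} 1_A*\mu_{B_\delta}(x) = \|1_A\|_1\,\|\mu_{B_\delta}\|_1 = |A| = \alpha|B|.
\]
Next I would localise its support. Because $A\subseteq B$, and for $b\in B$, $b'\in B_\delta$ the triangle inequality gives $|1-\gamma(b+b')|\le |1-\gamma(b)|+|1-\gamma(b')|\le \rho+\delta\rho$ for every $\gamma\in\Gamma$, we get $B+B_\delta\subseteq B_{1+\delta}$, and hence $1_A*\mu_{B_\delta}$ is supported on $A+B_\delta\subseteq B_{1+\delta}$.

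Now I would finish by pigeonhole. If we had $|A\cap(x+B_\delta)| < 0.9\alpha|B_\delta|$, i.e.\ $1_A*\mu_{B_\delta}(x) < 0.9\alpha$, for every $x\in G$, then combining the two displays above with the hypothesis $|B_{1+\delta}|\le 1.01|B|$ yields
\[
\alpha|B| = |A| = \sum_{x\in B_{1+\delta}} 1_A*\mu_{B_\delta}(x) < 0.9\alpha\,|B_{1+\delta}| \le 0.9\cdot 1.01\,\alpha|B| < \alpha|B|,
\]
a contradiction. So the required translate $x+B_\delta$ exists, completing the proof.

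The argument is essentially routine, so there is no real obstacle; the only points deserving care are the (standard) containment $B+B_\delta\subseteq B_{1+\delta}$ — which also tacitly uses that $(1+\delta)\rho\le 2$, harmless since $\delta\le \alpha/(240d)$ is tiny — and the observation that the hypothesis $|B_{1+\delta}|\le 1.01|B|$ is genuinely being used, as regularity alone only gives $|B_{1+\delta}|\le (1+12d\delta)|B|$. If one prefers to invoke regularity directly instead, the dual version works: restrict the averaging to $x\in B$, use regularity to bound $|B\setminus B_{1-\delta}|\le 12d\delta|B|\le \tfrac{\alpha}{20}|B|$, deduce $\sum_{x\in B}|A\cap(x+B_\delta)|\ge (\alpha-12d\delta)|B||B_\delta|\ge 0.95\alpha|B||B_\delta|$, and conclude by pigeonhole over the $|B|$ translates indexed by $x\in B$.
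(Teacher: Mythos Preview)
Your proof is correct. It is a clean support-plus-pigeonhole argument: the total mass of $1_A*\mu_{B_\delta}$ is $\alpha|B|$, the support sits in $B_{1+\delta}$, and the hypothesis $|B_{1+\delta}|\le 1.01|B|$ forces some value to be at least $0.9\alpha$.

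The paper takes a slightly different route. Instead of using the size hypothesis $|B_{1+\delta}|\le 1.01|B|$, it uses the other hypothesis $\delta\le \alpha/(240d)$ together with regularity of $B$: Proposition~\ref{prop:bohr_set_convolution} gives $\|\mu_B*\mu_{B_\delta}-\mu_B\|_1\le \alpha/10$, and then pairing against $1_A$ and rewriting $\langle \mu_B*\mu_{B_\delta},1_A\rangle=\langle \mu_B,\mu_{B_\delta}*1_A\rangle$ shows that the average of $\mu_{B_\delta}*1_A$ over $B$ is at least $0.9\alpha$, so some $x\in B$ works. In effect the two proofs exploit complementary halves of the stated hypotheses: yours uses the growth bound on $|B_{1+\delta}|$ and never touches regularity, while the paper uses regularity via Proposition~\ref{prop:bohr_set_convolution} and never touches the growth bound. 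Your argument is the more elementary of the two; the paper's has the minor bonus of producing $x\in B$ rather than merely $x\in B_{1+\delta}$, though the lemma as stated does not ask for this. The alternative you sketch at the end (restrict to $x\in B$, lose at most $12d\delta|B|$ in mass by regularity) is essentially the paper's approach.
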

    \begin{proof}
    Applying Proposition 1 to \(B_{\delta}\) we have
    \[||\mu_B*\mu_{B_{\delta}}(x) - \mu_B(x)||_1\leq \alpha/10,\]
    and by the triangle inequality we get
    \begin{equation*}
    \begin{split}
    \alpha = \mu_B(A) &\leq ||\mu_B*1_A(x) - \mu_B*\mu_{B_{\delta}}*1_A(x)||_1 + ||\mu_B*\mu_{B_{\delta}}*1_A(x)||_1\\
    &\leq \alpha/10 + \frac{1}{|B|}\sum_{x\in B}\mu_{B_{\delta}}*1_A(x).
    \end{split}
    \end{equation*}
    Thus for some \(x\in B\) we have
    \[\mu_{B_{\delta}}*1_A(x) \geq 0.9\alpha\]
    as required.
    \end{proof}
    We now generalize Lemma~\ref{lemma:high_density_small_bohr_set} to allow multiple factors \(\delta_i\) and multiple coefficients $a_i$, for which we consider \(a_i\cdot A\). In this lemma we have to assume that our group is \(\mathbb{Z}/p\mathbb{Z}\). The reason is that we want to define the operation of multiplying a Bohr set by an element of the group. Let \(B=\mathrm{Bohr}(\Gamma, \rho))\) and \(a\in \mathbb{Z}/p\mathbb{Z}\), we define
    \[aB:= \{x\in \mathbb{Z}/p\mathbb{Z} : |1-\gamma(a^{-1}x)|\leq \rho \text{ for all } \gamma\in \Gamma\}.\]
    So if \(\gamma\) is a generator of $B$, then \(\gamma^{a^{-1}}\) is a generator of \(aB\). This way if \(x\in B\) then \(ax\in aB\) and \(aB\) is a Bohr set of the same dimension and radius as~$B$.
    \begin{lemma} \label{lemma:coefficients}
        Let \(B\subseteq \mathbb{Z}/p\mathbb{Z}\) be a regular Bohr set of dimension $d$ and radius \(\rho\). Let \(A\subseteq B\) be its subset of size \(\alpha |B|\). Let \(a_1, a_2, \cdots, a_k\) be non-zero integers and \(\delta_1, \delta_2,\cdots \delta_k\) numbers from the interval $(0,1]$. There exist sets \(A_1,A_2,\cdots A_k \subseteq A-x\) for some translate $x$ and a Bohr set \(B'\) such that \(a_i \cdot A_i \subseteq B'_{\delta_i}\)  and either \[|(a_i\cdot A_i) \cap B'_{\delta_i}| \geq \frac{7}{8}\alpha |B'_{\delta_i}| \text{ for all $i$}\] or \[|(a_i\cdot A_i) \cap B'_{\delta_i}| \geq (1 + 1/16k)\alpha |B'_{\delta_i}| \text{ for some $i$}.\] Moreover, $B'$ can be chosen so that its dimension is $d$ and its radius is \(\rho_2 \geq \rho\frac{c \alpha}{k d}.\)
    \end{lemma}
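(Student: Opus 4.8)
The plan is to prove Lemma~\ref{lemma:coefficients} by induction on the number of coefficients: I handle $a_1,a_2,\dots,a_k$ one at a time, by $k$ successive applications of a mild strengthening of Lemma~\ref{lemma:high_density_small_bohr_set}. A strengthening is needed because Lemma~\ref{lemma:high_density_small_bohr_set} loses a factor $0.9$ in the density, and iterating that $k$ times would be fatal; instead I want, at each step, the following dichotomy for the current set $A'$ of density $\beta$ inside the current regular Bohr set: either some translate makes the density inside the next, smaller dilate at least $(1+1/(16k))\beta$, a density increment at which point we stop and output the second alternative of the lemma, or some translate makes it at least $(1-1/(100k))\beta$. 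This is exactly the averaging computation behind the proof of Lemma~\ref{lemma:high_density_small_bohr_set}: by Proposition~\ref{prop:bohr_set_convolution} and regularity, the average of $\mu_{B_{\delta}}*1_{A'}$ over the ambient Bohr set differs from $\beta$ by at most $\beta/(100k)$ once $\delta\leq c\alpha/(kd)$ and $B_{\delta}$ is regular (which we may assume, replacing $\delta$ by a regular value via Proposition~\ref{regular_sub_bohr_set}), so if no point beats $(1+1/(16k))\beta$ then the values cannot all lie below $(1-1/(100k))\beta$.

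To bring in the coefficient $a_i$ I use the operation $B\mapsto aB$ introduced just before the statement: multiplication by the unit $a_i\in\mathbb{Z}/p\mathbb{Z}$ is a bijection sending a regular Bohr set of dimension $d$ and width $\rho$ to one of the same dimension and width, so at step $i$ I pass to the rescaled picture, localise there with a dilation of order $\delta_i\cdot c\alpha/(kd)$, and transfer the conclusion back. The induction carries a regular Bohr set of dimension $d$, a partially assembled translate $x$, and a set that is a translate–restriction of $A$ with density $\beta_i$; the $a_i$–step rescales, localises, and records the data that eventually makes $A_i:=(A-x)\cap a_i^{-1}B'_{\delta_i}$ large enough. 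If some step yields an increment we are done with the second case; otherwise after $k$ steps the density is at least $(1-1/(100k))^k\alpha\geq\frac{7}{8}\alpha$, which is the first case. The widths must be combined so that the final one is at least $\rho\cdot c\alpha/(kd)$ and not a $k$-th power of it — for this the factors $\delta_i$ should be inserted only at the final stage rather than compounded along the way — and the dimension stays $d$ because both operations used, dilation and multiplication by a unit, preserve it.

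The step I expect to be the real obstacle is the reconciliation: the conclusion demands a \emph{single} translate $x$ and a \emph{single} Bohr set $B'$ that simultaneously certify $a_iA_i\subseteq B'_{\delta_i}$ together with the density bound for every $i$, while the naive iteration produces a freshly rescaled Bohr set and a freshly shifted copy of $A$ at each stage. Arranging the $k$ rescalings so that they compose into one Bohr set of dimension $d$, and the $k$ localisations so that they are all visible from one translate of $A$, while simultaneously tracking the multiplicative loss in the width so as to land exactly on $\rho c\alpha/(kd)$, is where essentially all the difficulty lies; by contrast the per–step density dichotomy is a routine consequence of Proposition~\ref{prop:bohr_set_convolution} and regularity.
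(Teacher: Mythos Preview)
Your proposal correctly identifies its own fatal gap: the ``reconciliation'' problem you flag at the end is not a technicality but the whole point, and you do not solve it. An inductive scheme that localises once per coefficient will, at step $i$, produce a translate $x_i$ and a Bohr set adapted to $a_i$; there is no mechanism in your outline that forces $x_1=x_2=\cdots=x_k$, and nesting the Bohr sets does not help because the density statement for coefficient $a_j$ is recorded in a larger set that may be destroyed by further localisation. The per-step dichotomy you describe is indeed routine, but it is also beside the point.

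The paper's proof avoids iteration entirely and does a single simultaneous pigeonhole. One defines, for a suitably small $\epsilon\asymp \alpha/(kd\prod_j|a_j|)$, the Bohr sets
\[
B^i:=\Bigl(\prod_{j\neq i}a_j\Bigr)B_{\epsilon\delta_i}\quad\text{and}\quad B':=\Bigl(\prod_j a_j\Bigr)B_{\epsilon},
\]
so that $a_iB^i=B'_{\delta_i}$ for every $i$ \emph{and} every $B^i$ sits inside a tiny dilate of the original $B$. Proposition~\ref{prop:bohr_set_convolution} then gives $\|\mu_B*\mu_{B^i}-\mu_B\|_1\leq \alpha/(16k)$ for each $i$, and one averages the \emph{sum} $\sum_{i=1}^k \mu_{B^i}*1_A(x)$ over $x\in B$ to find a single $x$ with $\sum_i \mu_{B^i}*1_A(x)\geq (k-1/16)\alpha$. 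If no term exceeds $(1+1/(16k))\alpha$, each term is forced to be at least $\tfrac{7}{8}\alpha$. Setting $A_i=(A-x)\cap B^i$ gives $a_iA_i\subseteq B'_{\delta_i}$ with the required density, all from one translate and one $B'$. The width of $B'$ is $\rho\epsilon$, which is $\gg \rho\alpha/(kd)$ with the $\prod_j|a_j|$ absorbed into the constant; no $k$-th power appears precisely because there is no iteration.
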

    \begin{proof}
        Let \(\epsilon := \frac{1}{16k}\alpha (\Pi_j |a_j|)^{-1} / 24d\),  \(B^i := \Bigl(\Pi_{j\neq i} a_j\Bigr) B_{\epsilon\cdot \delta_i}\) and \(B' := \Bigl(\Pi_{j} a_j \Bigr) B_{\epsilon}\). Clearly $B'$ satisfies the conditions on the dimension and the radius. \\
        Notice, that from Proposition 1 we have
        \[||\mu_B*\mu_{B^{i}} - \mu_B||_1 \leq  \frac{1}{16k}\alpha.\]
        Since \(\mu_B(A) = \alpha\) and by the application of the triangle inequality we get
        \begin{equation*}
        \begin{split}
        k\alpha\leq \sum_{i=1}^k \mu_B(A) &\leq \sum_{i=1}^k ||\mu_B*1_A(x) - \mu_B*\mu_{B^i}*1_A(x)||_1 +
        \sum_{i=1}^k ||\mu_B*\mu_{B^i}*1_A(x)||_1\\
        &\leq \frac{1}{16}\alpha + \frac{1}{|B|}\sum_{i=1}^k\sum_{x\in B} \mu_{B^i}*1_A(x).
        \end{split}
        \end{equation*}
        Thus, for some \(x\in B\) the sum is at least equal to the average, so
        \[\sum_{i=1}^k\mu_{B^i}*1_A(x) \geq (k-1/16)\alpha.\]
        By our assumption we have \(||\mu_{B^i}*1_A(x)||_{\infty}\leq (1 + 1/16k)\alpha\), thus
        \begin{equation*}
        \begin{split}
        \mu_{B^i}*1_A(x) &\geq (k-1/16)\alpha - \sum_{j\neq i} \mu_{B^i}*1_A(x)\\
        &\geq (k-1/16)\alpha - (k-1)(1 + 1/16k)\alpha\\
        &\geq \Bigl( 1 - \frac{2}{16} + \frac{1}{16k}\Bigr)\alpha\\
        &\geq \frac{7}{8}\alpha.
        \end{split}
        \end{equation*}
        Set \(A_i = (A-x) \cap B^i\). We clearly have \(a_i\cdot A_i \subseteq a_i  B^i\), therefore \(a_i\cdot A_i\subseteq B'_{\delta_i}\) for all $i$.
    \end{proof}
    All results which use Lemma~\ref{lemma:coefficients} will be also stated for the group \(\mathbb{Z}/p\mathbb{Z}\).
    In the next lemma we prove a density increment on a Bohr set with dimension and width slightly smaller than the initial one. A new idea here is applying Theorem~\ref{thm:croot_sisask_modified} to the set of Popular sums, as suggested by Sanders and Prendiville~\cite{prend}.
    \begin{lemma}\label{lemma:increment_on_bohr_set}
        Let \(B\subseteq \mathbb{Z}/p\mathbb{Z}\) be a regular Bohr set of dimension $d$ and radius \(\rho\) and let \(A\subseteq B\) be its subset of size \(\alpha |B|\). Let \[a_1 x_1 + a_2 x_2 + \cdots +a_k x_k =0\] be an invariant equation in \(k\geq 4\) variables and suppose that the number of solutions to in $A$ does not exceed \[\exp(-Cd (\log(d/\alpha)))|A|^{k-1}.\] Then we can find a Bohr set \(B^*\) of dimension \(d+d'\) and radius \(\rho_2\), such that for some $x$ we have \(B^*\cap (A-x) \geq (1+1/16k)\alpha |B^*|.\) Moreover, $B^*$ can be chosen so that
        \[d' \leq C\log^4(2/\alpha)\]
        and 
        \[\rho_2\geq \rho\alpha^{3/2}/(d^5d')\]
    \end{lemma}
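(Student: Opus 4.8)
The plan is to run the density-increment machinery one more time: Lemma~\ref{lemma:coefficients} sets up dense pieces inside a mildly shrunk Bohr set, Theorem~\ref{thm:croot_sisask_modified} supplies the almost-periods, and then either one of these structures yields the increment or the resulting near-flatness forces too many solutions. First I would apply Lemma~\ref{lemma:coefficients} to $A$ with the given coefficients and dilation parameters $\delta_1=\delta_2$ a small absolute constant and $\delta_3,\dots,\delta_k$ of size about $1/d$, obtaining translated pieces $A_1,\dots,A_k\subseteq A-x_0$, a regular Bohr set $B'$ of dimension $d$ and radius $\gtrsim\rho\alpha/(kd)$, and a dichotomy. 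If some $a_iA_i$ has density $\ge(1+1/16k)\alpha$ in $B'_{\delta_i}$, then (as $a_i$ is a unit in $\mathbb{Z}/p\mathbb{Z}$) the set $A_i\subseteq A-x_0$ has that density inside a translate of the Bohr set $a_i^{-1}B'_{\delta_i}$, whose dimension is $d\le d+d'$ and whose radius is $\gtrsim\delta_i\rho\alpha/(kd)\gtrsim\rho\alpha^{3/2}/(d^5d')$; so $B^*:=a_i^{-1}B'_{\delta_i}$, re-centred, finishes the proof. It therefore remains to treat the branch in which every $a_iA_i$ has density $\ge\tfrac78\alpha$ in $B'_{\delta_i}$, and to show that this branch either hands us $B^*$ or contradicts the assumed scarcity of solutions.

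Write $g = 1_{a_2A_2}*\cdots*1_{a_kA_k}$ (a convolution of $\ge 3$ indicators, since $k\ge4$), which is supported in $B'_{\delta_2}$ up to a regularity error made small by the choice of $\delta_3,\dots,\delta_k$, with mean $\bar g = ||g||_1/|B'_{\delta_2}|$; the number of solutions in $A$ is at least $(1_{a_1A_1}*g)(0) = \sum_{u\in -a_1A_1} g(u)$. The crucial move is to apply Theorem~\ref{thm:croot_sisask_modified} not to $g$ but to $g*1_{B'_{\delta'}}$ for a suitably small $\delta'$, i.e.\ to run Croot--Sisask on the list $a_2A_2,\dots,a_{k-1}A_{k-1}$ with ``$L$'' the full dilate $B'_{\delta'}$, ``$M$'' one of the $a_iA_i$, and ``$S$'' equal to (say) $a_1A_1$ inside the ambient $B'$. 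Since $|a_1A_1+a_1A_1|\le|B'_{2\delta_1}|\lesssim\alpha^{-1}|a_1A_1|$ and since the densities $\sigma$ of $S$ and $\eta = |M|/|L|$ are both $\gtrsim\alpha$, all of $\log(2K),\log(1/\sigma),\log(2/\eta)$ are $O(\log(2/\alpha))$, so Croot--Sisask can be run with a \emph{constant} $\epsilon$. This is precisely what keeps the dimension increment down to $d'\ll\log^4(2/\alpha)$ and, after tracking the width through Theorem~\ref{thm:croot_sisask_modified} and Lemma~\ref{lemma:coefficients}, produces a Bohr set $\tilde B$ of dimension $d+d'$ and radius $\gtrsim\rho\alpha^{3/2}/(d^5d')$ on which $g*\mu_{B'_{\delta'}}$ is invariant up to a constant fraction of $\bar g$. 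Keeping the Bohr factor $1_{B'_{\delta'}}$ in the convolution is essential: it is what makes this error \emph{relative}, whereas without it the natural almost-periodicity error is usefully small only when $\epsilon\ll\alpha$, which would inflate $d'$ by a factor $\alpha^{-2}$.

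Now the final split. If $||1_{a_jA_j}*\mu_{\tilde B}||_\infty\ge(1+1/16k)\alpha$ for some $j$, then, exactly as in the first branch above, a re-centred translate of $a_j^{-1}\tilde B$ carries the required density increment and we are done with $B^*=a_j^{-1}\tilde B$. Otherwise $||1_{a_jA_j}*\mu_{\tilde B}||_\infty<(1+1/16k)\alpha$ for every $j$: peeling $1_{a_2A_2}$ off $g*\mu_{B'_{\delta'}}$ and using this for $j=2$ gives $||g*\mu_{B'_{\delta'}}||_\infty\lesssim\bar g$, so the set of popular smoothed sums $P = \{s : (g*\mu_{B'_{\delta'}})(s)\ge c\bar g\}$ is both a union of $\tilde B$-cosets occupying a $\gtrsim 1$ proportion of $B'_{\delta_2}$ (being a superlevel set of an almost-$\tilde B$-invariant function that carries $\gtrsim 1$ of the total mass) and, by a reverse-Markov argument applied to the two-sided bound $\tfrac34\alpha\lesssim 1_{a_1A_1}*\mu_{\tilde B}<(1+1/16k)\alpha$, such that $a_1A_1$ has density $\gtrsim\alpha$ on a $\gtrsim 1$ proportion of $\tilde B$-cosets. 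Overlapping these two facts should force $|a_1A_1\cap(-P)|\gtrsim\alpha|B'_{\delta_2}|$, whence $\sum_{u\in -a_1A_1}g(u)\ge\sum_{u\in(-a_1A_1)\cap P}g(u)\gtrsim\alpha\bar g|B'_{\delta_2}| = \alpha||g||_1\ge\alpha(\tfrac78\alpha)^{k-1}|B'_{\delta_2}|^{k-1}$; combining this with $|B'_{\delta_2}|\ge(c\alpha/kd)^{O(d)}|B|$ (Proposition~\ref{bohr_set_size_lower_bound}) and $|A|=\alpha|B|$ shows the solution count exceeds $\exp(-C'dk\log(d/\alpha))|A|^{k-1}$, which beats the hypothesised $\exp(-Cd\log(d/\alpha))|A|^{k-1}$ once $C>C'k$ --- a contradiction that closes this sub-case.

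The hard part is this last counting step, and the difficulty is squarely the requirement $d'\ll\log^4(2/\alpha)$: it forbids a small $\epsilon$ in Croot--Sisask, so every almost-periodicity error must be handled relative to a mean value, which is exactly why one works with the smoothed convolution $g*\mu_{B'_{\delta'}}$ and its popular-sum set, and why one must ensure that $P$ is genuinely $\tilde B$-structured with a \emph{constant} (not $\alpha$- or $2^d$-dependent) density inside $B'_{\delta_2}$ --- it is this last demand that dictates the hierarchy $\delta_1=\delta_2\gg\delta_3,\dots,\delta_k$ and that forces one to verify regularity of all the dilates $B'_{j\delta_i}$ appearing above. Everything else --- checking that the admissible Bohr sets $a_i^{-1}B'_{\delta_i}$, $a_j^{-1}\tilde B$ and $B'_{\delta'}$ all clear the threshold $\rho_2\ge\rho\alpha^{3/2}/(d^5d')$ simultaneously, and that every implied constant is genuinely a constant (polynomial in the fixed $k$ and the $|a_i|$) --- is routine but lengthy.
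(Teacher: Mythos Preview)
Your outline takes a genuinely different route from the paper and, as written, has a gap in the final counting step. The paper runs the argument the other way around: it defines the popular-sum set \emph{first}, using the unsmoothed $(k-2)$-fold convolution $f=1_{a_3A_3}*\cdots*1_{a_kA_k}$ and threshold $Q=(\alpha/8)|A_4|\cdots|A_k|$, and then splits according to whether $|P|$ exceeds half of the ambient dilate. In both branches it invokes Theorem~\ref{thm:croot_sisask_modified} with $L$ taken to be $P^c$ (large-$P$ case) or $-P$ (small-$P$ case) --- not a Bohr dilate --- and feeds the resulting almost-periodicity straight into Lemma~\ref{lemma:almost_periods_to_increment}, which manufactures the increment directly. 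The few-solutions hypothesis enters only once, in the large-$P$ branch, to show that $1_{a_1A_1}*1_{a_2A_2}*1_P(0)$ is small and hence $1_{a_1A_1}*1_{a_2A_2}*1_{P^c}(0)\ge(1-\tfrac18)|A_1||A_2|$. There is never a direct solution count at the end, and the scale choices $r_1=r_3=1$, $r_2=\delta/\log(2/\alpha)$, $r_4=\cdots=r_k=\delta$ with $\delta\asymp 1/d$ are tuned specifically so that the hypotheses of Lemma~\ref{lemma:almost_periods_to_increment} come out with $\epsilon$ an absolute constant.

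The concrete problem with your counting is the smoothed-versus-unsmoothed mismatch. You set $P=\{s:(g*\mu_{B'_{\delta'}})(s)\ge c\bar g\}$, so membership in $P$ controls only the \emph{smoothed} value and says nothing about $g(u)$ itself. Thus the step
\[
\sum_{u\in(-a_1A_1)\cap P}g(u)\ \gtrsim\ \alpha\,\bar g\,|B'_{\delta_2}|
\]
does not follow from $|(-a_1A_1)\cap P|\gtrsim\alpha|B'_{\delta_2}|$: for any individual $u\in P$ one may have $g(u)=0$. If you try to repair this by shifting the smoothing onto $1_{-a_1A_1}$, you obtain only $\mathbb{E}_{v\in B'_{\delta'}}\sum_{u\in -a_1A_1}g(u-v)\gtrsim\alpha\|g\|_1$, which lower-bounds the number of solutions to $\sum a_ix_i=v$ for some nonzero $v\in B'_{\delta'}$, not to $\sum a_ix_i=0$. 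The paper's device of putting $P^c$ or $-P$ into the $L$-slot of Theorem~\ref{thm:croot_sisask_modified} is precisely what avoids this: the convolution shown to be almost-periodic already contains $1_L$, so Lemma~\ref{lemma:almost_periods_to_increment} converts the large value at $0$ into an increment with no need to pass back from a smoothed quantity to an unsmoothed one. A secondary issue: with $\delta_1=\delta_2$ an absolute constant your doubling bound $|B'_{2\delta_1}|\lesssim|B'_{\delta_1}|$ is not guaranteed, since regularity only controls dilates of relative size $1+O(1/d)$; the paper avoids this by placing the set playing the role of ``$A_1$'' in Theorem~\ref{thm:croot_sisask_modified} at a strictly smaller scale and choosing $S$ inside a further $O(1/d)$-dilate.
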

    \begin{proof}
        We will set up the proof by using Lemma~\ref{lemma:coefficients} with \(a_i\) - the coefficients of the equation. Let $B'$ be the Bohr set defined at the beginning of the proof of the Lemma, explicitly \(B' := \Bigl(\Pi_{j} a_j \Bigr) B_{\epsilon}\) for \(\epsilon := \frac{1}{16k}\alpha (\Pi_j |a_j|)^{-1} / 24d\). By Proposition 2 it is possible to choose \(\delta \geq \frac{1}{Cd}\) so that \(|B'_{1+(k-3)\delta}| \leq 1.01 |B'|\) and \(B'_{\delta}\) is regular. We are ready to use Lemma~\ref{lemma:coefficients}, specifying \(r_1 = r_3 = 1\), \(r_2 = \delta /m \) and \( r_4 = r_5 = \cdots = r_k = \delta\), where we set \(m:=\log (2/\alpha)\). If the second conclusion is true, that is for some $i$ there is \(|(a_i\cdot A_i) \cap B'_{r_i}| \geq (1 + 1/16k)\alpha |B'_{r_i}|\) we immediately finish the proof, as we have the desired density increment with \(B^* = B'_{r_i}\).
        Otherwise we continue the proof, keeping the first conclusion.
        We now define a set of Popular sums $P$. Consider the function
        \[f(x) = 1_{a_3\cdot A_3}*1_{a_4\cdot A_4}*\cdots *1_{a_k\cdot A_k}(x).\]
        We see that
\[\mathrm{supp} f = a_3\cdot A_3 + a_4\cdot A_4 + \cdots + a_k\cdot A_k \subseteq B' + (k-3)B'_{\delta}\subseteq B'_{1+(k-3)\delta}.\]
We fix a threshold to be
\[Q := \frac{\alpha}{8}|A_4||A_5|\cdots|A_k|\]
and finally define \(P\subseteq B'_{1+(k-3)\delta}\) as
\[P:=\{x\in B'_{1+(k-3)\delta} : f(x)\geq Q\}.\]
We will consider two cases, depending on the size of $P$. Let us first assume that \(|P|\geq | B'_{1+(k-3)\delta}|/2\) and look at the other possibility later. We will apply Theorem~\ref{thm:croot_sisask_modified} to the sets, \(M:=a_1\cdot A_1\), \(A :=a_2\cdot A_2\) and \(L=P^c = B'_{1+(k-3)\delta} \setminus P\). We define $S$ to be \(a_2\cdot B'_{\delta\nu}\) where \(\nu:=1/Cd\), to get a regular Bohr set with radius at least \(\rho/Cd^3\), such that \(|B'_{\delta(1+\nu)}|\leq 2|B'_{\delta}|\).
We have to verify the assumptions of Theorem~\ref{thm:croot_sisask_modified}, let us start by calculating 
\[|a_2\cdot A_2 + S| \leq |B'_{\delta(1+\nu)}|\leq 2|B'_{\delta}| = 2 |a_2 \cdot B^{(2)}|\leq 2 \cdot \frac{8}{7\alpha}|A_2| =\frac{16}{7\alpha}|a_2\cdot A_2|,\]
and so \(K = \frac{16}{7\alpha}\) is sufficient. To calculate \(\eta\), let us see that
\[|L| \leq (1 - 1/2)|B'_{1 + (k-3)\delta}|\leq 1.01 \cdot |B'| /2 = 0.505 |a_1 \cdot B^{(1)}|\leq \frac{2}{3\alpha}|a_1\cdot A_1|,\]
so \(\eta = \alpha \leq 1\) is enough.\\
Therefore from Theorem~\ref{thm:croot_sisask_modified}, we get a Bohr set $B^*$, such that for every \(t\in B^*\) there is 
\[||1_{a_1\cdot A_1}*1_{a_2\cdot A_2}*1_L(\cdot+t) - 1_{a_1\cdot A_1}*1_{a_2\cdot A_2}*1_L||_{\infty}\leq \epsilon |A_1||A_2|.\]
We now show that Lemma~\ref{lemma:almost_periods_to_increment} can be used to obtain a density increment. We easily see that \[||1_{a_2\cdot A_2}*1_{L}||_1 = |A_2||L|\leq |A_1||A_2|
/(2\alpha)\]
and so we take our function to be \(1_{a_2\cdot A_2}*1_{L}/|A_1||A_2|\).
To show that the remaining assumption is satisfied we need
\[1_{a_1\cdot A_1}*1_{a_2\cdot A_2}*1_{L}(0)\geq (1-\epsilon)|A_1||A_2|.\] Let us use what we know about the number of solutions in $A$. We notice that
\[1_{a_1\cdot A_1}* 1_{a_2\cdot A_2} * 1_P(0) \cdot Q \leq \exp(-Cd\log(d/\alpha))|A|^{k-1}.\tag{3}\]
By Proposition 3 we also see that
\[|A_i|\geq \frac{7\alpha}{8}|B'_{\delta}|\geq \Bigl(c\frac{\alpha}{d^2k}\Bigr)^{3d} |B|.\]
Simplifying the constants we get
\[|A_i|\geq \exp(- C' 3d \log(d^2k/\alpha))|A| \geq \exp(-Cd\log(d/\alpha))|A|.\] Applying this inequality multiple times to (3) we have
\begin{equation*}
\begin{split}
1_{a_1\cdot A_1}* 1_{a_2\cdot A_2} * 1_P(0) \cdot \frac{\alpha}{8} |A|^{k-3} &\leq 1_{a_1\cdot A_1}* 1_{a_2\cdot A_2} * 1_P(0) \cdot Q \cdot \exp(C d\log (d/\alpha))\\
&\leq \frac{\alpha }{64} |A|^{k-1},
\end{split}
\end{equation*}
where the last inequality follows from the restriction on the number of solutions, provided the constant $C$ has been chosen large enough. Thus we have
\[1_{a_1\cdot A_1}* 1_{a_2\cdot A_2} * 1_P(0) \leq \frac{1}{8} |A|^2 .\]
Because \(1_L = 1 - 1_P\) we have
\[1_{a_1\cdot A_1}* 1_{a_2\cdot A_2} * 1_L(0) \geq \Bigl(1-\frac{1}{8}\Bigr)|A|^2\geq \Bigl(1-\frac{1}{8}\Bigr)|A_1||A_2|.\]
Thus we can apply Lemma~\ref{lemma:almost_periods_to_increment} with \(\epsilon = \frac{1}{8}\) to finish the first case of the proof.\\
For the second case let us assume that \(|P|\leq | B'_{1+(k-3)\delta}|/2\).\\ We will proceed in a similar fashion, however this time we will apply Corollary~\ref{cor:croot_sisask} to the sets \(a_4 \cdot A_4, \cdots, a_k\cdot A_k\), \(M:=a_3 \cdot A_3\) and $L:=-P$. The Bohr set will be \(B'_{1+(k-3\delta)}\) as previously.
We use almost the same $S$ as in the previous case. We only swap \(a_2\cdot A_2\) for \(a_4\cdot A_4\), that is \(S:= a_4\cdot B'_{\delta\nu}\). Arguing in exactly the same way we have 
\[|a_4\cdot A_4 + S| \leq \frac{16}{7\alpha}|a_4\cdot A_4|.\]
This time we have 
\(|L| \leq |B'_{1 + (k-3)\delta}|/2 \leq \frac{2}{3 \alpha}|a_3\cdot A_3|,\)
again arguing in the same way, just swapping \(a_1\cdot A_1\) for \(a_3\cdot A_3\).\\
We also estimate the \(||\cdot|| _1\) norm, notice that
\begin{equation*}
\begin{split}
||1_{a_4\cdot A_4}*\cdot 1_{a_5\cdot A_5} *\cdots * 1_{a_k \cdot A_k } *1_{-P}||_1 &= |A_4||A_5|\cdots |A_k||L|\\
&\leq |A_3||A_4||A_5|\cdots |A_k| /(2\alpha).
\end{split}
\end{equation*}
So this time our function in the application of Lemma 1 will be
\[1_{a_4\cdot A_4}*\cdot 1_{a_5\cdot A_5} *\cdots * 1_{a_k \cdot A_k } *1_{-P}/ |A_4||A_5|\cdots |A_k|.\]
It remains to estimate (using $f$ defined above) \(f*1_{-P}(0)\). We see that
\[f*1_{-P}(0)=\sum_{p\in P}f(p) = |A_3||A_4|\cdots|A_k| - \sum_{p\notin P}f(p),\]
however, by the definition of $P$, we have
\[\sum_{p\notin P}f(p) \leq \frac{\alpha}{8}|B'_{1+(k-3)\delta}||A_4||A_5|\cdots |A_k|\leq \frac{1}{7}\cdot 1.01 \cdot|A_3||A_4|\cdots |A_4|,\]
where we use the fact that density of \(a_3\cdot A_3\) in \(B'\) is at least \(\frac{7}{8}\alpha\).
Combining the two previous lines we obtain
\[f*1_{-P}(0)\geq \Bigl(1-\frac{1}{6}\Bigr)|A_3||A_4|\cdots|A_k|,\]
which finally lets us apply Lemma~\ref{lemma:almost_periods_to_increment} with \(\epsilon = \frac{1}{6}\).\\
We actually considered 3 possible cases, one being the second conclusion of Lemma~\ref{lemma:coefficients}. That one had by far the worst density increment, which we record in the current lemma. This finishes the proof.
\end{proof}
We can finally prove Theorem~\ref{thm:number_of_solutions}.
\begin{proof}
Let $A^{(0)} = A$ and $B^{(0)} = \mathbb{Z}/p\mathbb{Z}$. As mentioned before, we pick
\[p > (|a_1|+|a_2|+\cdots+|a_n|) N.\]
We iterate Lemma~\ref{lemma:increment_on_bohr_set} on these sets, obtaining $(A^{(1)}, B^{(1)})$, \((A^{(2)}, B^{(2)}), \cdots\). We know that after, say, $s$ steps it is no longer possible. That is because the density of \(A^{(i)}\) cannot exceed $1$. Since Lemma~\ref{lemma:increment_on_bohr_set} cannot be applied to $A^{(s)}$ and $B^{(s)}$ we know that $A^{(s)}$ contains at least \(e^{-Cd_s\log(d/\alpha)}|A^{(s)}|^{k-1}\) solutions of \(a_1 x_1 + a_2 x_2 + \cdots +a_k x_k =0\).
We easily calculate that
\begin{align*}
s &\leq C \log(1/\alpha),\\
d_s &\leq C\log^5(2/\alpha),\\
\rho_s &\geq (c\alpha)^{Cs},\\
\end{align*}
and so
\[|A^{(s)}|\geq \alpha |B^{(s)}| \geq \alpha (\rho_s/2\pi)^{d_s}N \geq e^{-C\log^7(2/\alpha)}N.\]
We also note that \(\log d_s \ll \log \log ^5(2/\alpha)\ll \log(2/\alpha)\). Putting all of these bounds together with the estimate on the number of solutions in $A^{(s)}$ we have
\[e^{-Cd_s\log(d_s/\alpha)}|A^{(s)}|^{k-1} \geq e^{-C\log^7(2/\alpha)}N^{k-1} .\]
So $A$ contains at least \(e^{-C\log^7(2/\alpha)}N^{k-1}\) solutions to \(a_1 x_1 + a_2 x_2 + \cdots +a_k x_k =0\), since \(A^{(s)}\subseteq A\).
\end{proof}
\section{Behrend-type construction for the lower-bound}
In this section we prove Theorem~\ref{thm:behrend_construction}, which gives an analogous lower bound to what we have proved in the last section. We modify the argument of Tao (\cite{behrend_tao}, Proposition 1.3) to show a Behrend-type bound for convex equations.
\begin{proof}
    Let \(N =  M^{d+d'}\), where \(d'\geq 0\) is an arbitrary integer and $M$, $d$ will be chosen later. Define a map \(D:[N]\rightarrow [M]^d\) to be the mapping that sends a number to the vector of its last $d$ digits in base $M$. To be precise, we define $D$ as 
    \[D(n)_i = \Bigl\lfloor\frac{n}{M^{i-1}}\Bigr\rfloor \mod M \text{ for } 1\leq i\leq d.\]
    Define \(T\subseteq [N]\) by including all $n$ such that for all $i$ there is \(D(n)_i < \frac{M}{k}\). Clearly \(|T|\gg N\cdot k ^{-d}\). Among the numbers \(1, 2\cdots, dM^2\) choose $r$ such that the sphere \(||D(x)||_2^2=r\), which we call $A$, has at least \(\frac{|T|}{dM^2} \gg \frac{1}{d k^d}M^{d'+d-2}\) points inside \(T\). Suppose that \(x_1,x_2,\cdots,x_k\in A\) are such that \(x_1+\cdots+x_{k-1}=(k-1)y\). Since there is no carry-over in base $M$ when adding elements of $A$ up to $k$ times, we have
    \[D(x_1)+\cdots+D(x_{k-1})=(k-1)D(x_k).\]
    This however, can only be the case when \(D(x_1)=\cdots=D(x_{k-1})=D(x_k)\) by convexity, since all of the points belong to a sphere of radius $r$.
    We conclude that there is at most \(|A|M^{d'(k-2)}\) solutions to the equation \(x_1+\cdots+x_{k-1}=(k-1)x_k\) inside $A$.
    If for a small constant \(c>0\) we set \(d:=c\log(2/\alpha)\) and \(M:=\alpha^{-c}\) we have
    \begin{equation*}
    \begin{split}
    |A|/N &\gg \frac{M^{d'+d-2}}{Nd k^d } = \frac{1}{d k^d M^2}\\
    &\geq \frac{1}{c\log(2/\alpha)k^{c\log(2/\alpha)}\alpha^{-2c}}\geq \alpha.
    \end{split}
    \end{equation*}
    Moreover, bounding the size of $A$ by \(N\) we have
    \[\frac{|A|M^{d'(k-2)}}{N^{k-1}}\leq \frac{|A|}{M^{(k-1)d} + d'}\leq M^{d'+d-(k-1)d-d'} = M^{-(k-2)d} \ll e^{-c\log^2(2/\alpha)},\]
    which is the desired maximal number of solutions.
\end{proof}
\section{Improving the bound for many variables}
Theorem~\ref{thm:number_of_solutions} and the analogous result of Schoen and Sisask~\cite{schoen} give the relevant constant $7$ in the bound (for example \(e^{-C\log^7(2/\alpha)}N^{k-1}\) in Theorem~\ref{thm:number_of_solutions}). Behrend-type construction in Theorem~\ref{thm:behrend_construction} shows that this cannot be improved to more than~$2$. In this section we show how to bring the constant down almost to $6 $, provided the considered equation is long enough. By the end of this section this is summed up as the proof of Theorem~\ref{thm:strong_no_solutions}. The main idea is Theorem~\ref{thm:bohr_set_in_sumset} below, which allows us to find a large Bohr set within \(wA-wA\) for some $w$. After that a density increment can be obtained quite easily. The approach builds on an idea by Konyagin. To our knowledge it was not published, but is mentioned by Sanders in his survey~\cite{sanders_revisited}.
\begin{theorem}\label{thm:bohr_set_in_sumset}
Let \(A\subseteq B\) with \(|A|=\alpha |B|\) where $B$ is a regular Bohr set of dimension $d$ and width \(\rho\). Let \(m\geq 1\). There exists a Bohr set \(\Tilde{B}\subseteq 3^{m+1} A-3^{m+1} A\) of dimension $d+d'$ and width \(\rho_{\Tilde{B}}\). Moreover, $\Tilde{B}$ can be chosen so that
\[d' \leq C\log^{3+\gamma}(2/\alpha),\]
where \(\gamma=2^{1-m}\)
and 
\[\rho_{\Tilde{B}} \geq \rho\frac{c\alpha^3}{d^5d'}.\]
\end{theorem}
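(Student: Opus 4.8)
The plan is to build the Bohr set $\tilde B$ inside $3^{m+1}A - 3^{m+1}A$ by an iteration that, at each of $m+1$ stages, triples the dilation exponent while paying a multiplicative loss in the width and an additive loss in the dimension, and — crucially — improving the density parameter. The engine at each stage is a combination of the Croot--Sisask type machinery (Theorem~\ref{thm:croot_sisask_modified}) and the observation that almost-periods of a convolution $1_A * 1_A$ inside a Bohr set translate, via Lemma~\ref{lemma:almost_periods_to_increment} and Lemma~\ref{lemma:high_density_small_bohr_set}, into a genuine density increment; but here, instead of iterating until the density hits $1$, we use the increment to pass to a \emph{denser} model on a slightly smaller Bohr set and then repeat. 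The key quantitative point is that the dimension cost $d'$ at a stage where the set has density $\alpha'$ is of order $\log^{O(1)}(2/\alpha')$, and because each stage roughly squares $1/\alpha'$ (or at least multiplies $\log(2/\alpha')$ by a constant), after $m+1$ stages the cumulative dimension loss is dominated by a geometric-type sum whose largest term is the last one, giving the claimed exponent $3 + \gamma$ with $\gamma = 2^{1-m}$; the $\alpha^3$ and $d^5$ in the width come from tracking the worst width loss across the $m+1$ stages (each stage loses a factor like $\alpha/d^2$ or so from Proposition~\ref{bohr_set_size_lower_bound} and the regularization Propositions~\ref{regular_sub_bohr_set}, \ref{prop:bohr_set_convolution}).

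Concretely, I would first set up a single-stage lemma: given a regular Bohr set $B$ of dimension $d$ and a subset $A$ of density $\alpha$ in $B$, produce a Bohr set $B^{(1)} \subseteq 3A - 3A$ of dimension $d + O(\log^4(2/\alpha))$ and width $\ge \rho \cdot c\alpha^3/(d^5 \log^4(2/\alpha))$ — this is essentially a repackaging of the proof of Lemma~\ref{lemma:increment_on_bohr_set}, but using $1_A * 1_A$ and its popular sumset in place of the full $k$-term convolution, and reading off a sumset containment rather than stopping at an increment. Then, for the multi-stage version, at stage $j$ I would apply Lemma~\ref{lemma:coefficients} with the coefficient $3^{?}$ (so that dilating the current denser subset sits inside the correct dilate of the ambient Bohr set), invoke the single-stage lemma to get a Bohr set inside $3 A_j - 3 A_j$ where $A_j$ has density $\alpha_j$, and then — this is the heart of the improvement — observe that the \emph{Croot--Sisask set of almost periods itself}, being of the form $kT - kT$, already behaves like a dilate, so that one can afford to ask only for a weaker $\ell^\infty$ closeness at intermediate stages and thereby improve the density from $\alpha_j$ to something like $\alpha_j^{2^{-1}}$-scale (i.e.\ halve $\log(1/\alpha_j)$) with only a bounded-factor width loss. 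Iterating $m+1$ times and composing the containments $3A_j - 3A_j$ gives $3^{m+1} A - 3^{m+1} A \supseteq \tilde B$.

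The cumulative bookkeeping is then: the dimension loss is $\sum_{j=0}^{m} O(\log^{4}(2/\alpha_j))$ with $\log(2/\alpha_j) \approx 2^{-j}\log(2/\alpha)$ after the density-doubling trick, but the honest exponent one gets is $3 + 2^{1-m}$ rather than $4$ because the last (worst) stage contributes $\log^{3 + 2^{1-m}}(2/\alpha)$ — one gains the ``$1$'' in the exponent precisely from the fact that the popular-sums step in the $k$-term setting costs $\log^4$ while the pure sumset step costs only $\log^3$ times a correction that decays geometrically in the number of stages; I would state the single-stage lemma with a density-dependent exponent $3 + \gamma'$ and track how $\gamma'$ shrinks. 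The width loss multiplies: $m+1$ factors each of size $\ge c\alpha^3/(d^5 d')$ would naively give $\alpha^{3(m+1)}$, so to land on the stated $\rho_{\tilde B}\ge \rho c\alpha^3/(d^5 d')$ one must be careful that the width loss at stage $j$ is governed by $\alpha_j$, not $\alpha$, and $\alpha_j$ is \emph{increasing}, so the product telescopes and is dominated by the first stage's $\alpha^3$ — this is the step I expect to require the most care, since an innocent over-estimate here would spoil the width by a power of $m$ in the exponent. The main obstacle, then, is not any single inequality but organizing the iteration so that (i) the density genuinely improves at a geometric rate, (ii) the dimension increments sum to $O(\log^{3+\gamma}(2/\alpha))$ rather than $O(m\log^4)$, and (iii) the width losses telescope against the increasing densities; once the stage lemma is stated with the right density-dependent parameters, Theorem~\ref{thm:strong_no_solutions} follows by feeding $\tilde B \subseteq 3^{m+1}A - 3^{m+1}A$ into the density-increment argument exactly as in the proof of Theorem~\ref{thm:number_of_solutions}.
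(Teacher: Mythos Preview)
Your proposal has a genuine gap: you have misidentified the mechanism that produces the exponent $3+\gamma$. The paper does \emph{not} iterate a density increment (your ``pass to a denser model and repeat'') at all. The density of the working set stays essentially $\alpha$ throughout; what improves across the $m$ stages is the \emph{doubling constant} $K$ that enters the Croot--Sisask bound. Concretely, the paper builds nested almost-period sets $T_0\supseteq T_1\supseteq\cdots\supseteq T_m$ and growing sumsets $A'_0\subseteq A'_1\subseteq\cdots\subseteq A'_m$ by, at each stage, applying Corollary~\ref{cor:croot_sisask} and then using the pigeonhole (Pl\"unnecke/Ruzsa-type) observation that since $|A'_{i-1}+k_iT_i|\le (2/\alpha)|A'_{i-1}|$, some $0\le l_i<k_i$ satisfies $|A'_{i-1}+l_iT_i+T_i|\le (2/\alpha)^{1/k_i}|A'_{i-1}+l_iT_i|$. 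Setting $A'_i=A'_{i-1}+l_iT_i$ thus makes the pair $(A'_i,T_i)$ have doubling $K_i=(2/\alpha)^{1/k_i}$. With $k_i=\lceil\log^{1-2^{-i}}(2/\alpha)\rceil$, after $m$ steps one feeds $A,A'_m,-(A+A'_m)$ and $S=T_m$ into Theorem~\ref{thm:croot_sisask_modified} with $\log(2K)\asymp\log^{\gamma}(2/\alpha)$, and the bound $d'\ll\log^2(2/\eta)\log(2/\eta)\log(2K)$ then reads $\log^{3+\gamma}(2/\alpha)$. This doubling-reduction trick (attributed to Konyagin) is the entire point; your sketch never invokes it.

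Your alternative route cannot reach $3+\gamma$ as stated. If each stage used Theorem~\ref{thm:croot_sisask_modified} at ambient density $\alpha_j$ with $K_j\sim 1/\alpha_j$, the stage cost is $\asymp\log^4(2/\alpha_j)$; even granting your unexplained claim $\log(2/\alpha_j)\approx 2^{-j}\log(2/\alpha)$, the sum is $\asymp\log^4(2/\alpha)$, not $\log^{3+\gamma}$. Your attempt to recover the missing factor (``the pure sumset step costs only $\log^3$ times a correction'') is exactly where the paper's argument supplies the doubling trick, and you have no substitute for it. There is also a containment issue in your nesting: if $B_{j+1}\subseteq 3A_j-3A_j$ with $A_j\subseteq B_j\subseteq 3A_{j-1}-3A_{j-1}$, then $3A_j-3A_j\subseteq 18A_{j-1}-18A_{j-1}$, so the dilation grows like $6^m$, not $3^{m+1}$; the paper instead tracks $A'_i\subseteq n_{i+1}A-n_{i+1}A$ with $n_i=(3^i-1)/2$ via the explicit recursion $A'_i\subseteq 2A'_{i-1}+A-A-A'_{i-1}$, which is what gives the $3^{m+1}$.
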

\begin{proof}
    The plan is to  apply Theorem~\ref{thm:croot_sisask_modified} on inductively constructed sets \(A'_m\) and \(T_m\). The resulting Bohr set will have significantly smaller dimension than the one we get by naively applying Theorem~\ref{thm:croot_sisask_modified} to the set $A$.
    
Define constants \(k_0,k_1, k_2,\cdots, k_m\) to be
\[k_{i} := \bigl\lceil\log^{1-2^{-i}}(2/\alpha)\bigr\rceil,\]
for \(1\leq i\leq m\). Let \(k=k_0+k_1+\cdots k_m\), then \(k\ll m\log(2/\alpha)\). 
    We choose \(\delta \geq \frac{\alpha}{Cd}\) to get regular Bohr set \(B_{\delta}\) such that \(|B_{1+2\delta}|\leq \frac{3}{2} |B|\). By Lemma~\ref{lemma:high_density_small_bohr_set} there exists $x$ such that \(A'_0 := A\cap (x+B_{\delta})\) has density at least \(0.9\alpha\) within \(x+B_{\delta}\).
    Similarly, we choose \(\frac{\alpha^2}{Ckd^2}\leq \nu\leq \delta/k\) to get regular Bohr set \(B_{\delta\nu}\) such that \(|B_{\delta (1 + 2 k \nu)}|\leq \frac{3}{2}|B_{\delta}|\). Again, by Lemma~\ref{lemma:high_density_small_bohr_set} there exists $x'$ such that \(T_0:=A\cap (x'+B_{\delta\nu})\) has density at least \(0.8\alpha\) within \(x' + B_{\delta\nu}\). We see that
    \[|A+A_0'|\leq |B + (x + B_{\delta})|\leq |B_{1+\delta}|\leq \frac{3}{2}|B|\leq \frac{2}{\alpha}|A|\]
    and so
    \[\eta := |A+A'_0|/|A| \leq \frac{2}{\alpha}.\]
    Similarly we have
    \[|A'_0+ T_0|\leq |B_{\delta(1+\nu)}| \leq \frac{3}{2} |B_{\delta}|\leq \frac{3}{2 \cdot 0.9 \cdot \alpha} |A'_0| \leq \frac{2}{\alpha} |A_0'|.\]
    We will show how to inductively construct sets \(A'_i\) and \(T_i\), for which the following conditions hold. The properties deduced above serve as the base case.
    \[T_i\subseteq T_{i-1}\tag{4}\]
    \[A_{i-1}'\subseteq A_i'\subseteq B_{\delta(1+k\nu)}\tag{5}\]
    \[|A+A_i'|\leq \frac{2}{\alpha}|A|\tag{6}\]
    \[|A_i'+T_{i-1}|\leq \Bigl(\frac{2}{\alpha}\Bigr)^{1/k_{i-1}}\tag{7}\]
    \[|T_i| \geq \exp(-Ck_{i}^2/k_{i-1}\log^2(2/\alpha))|T_{i-1}| \tag{8}\]
    \[k_i T_i\subseteq A + A'_{i-1} - A - A'_{i-1}\tag{9}\]
    We apply Corollary~\ref{cor:croot_sisask} to the sets \(A_{i-1}', A, -(A+A_{i-1}')\) and \(T_{i-1}\) with \(\epsilon = 1/2\) and the chosen $k_i$ to obtain a set of periods \(T_i\subseteq T_{i-1}\) where condition (8) holds
    such that for every \(t\in k_iT_i-k_iT_i\) there is
\[|1_A*1_{A'_{i-1}}*1_{-(A+A'_{i-1})}(t)- 1_A*1_{A'_{i-1}}*1_{-(A+A'_{i-1})}(0)|\leq \frac{1}{2} |A||A'_{i-1}|.\]
Notice that \(1_A*1_{A'_{i-1}}*1_{-(A+A'_{i-1})}(0)=|A||A'_{i-1}|\), thus by the triangle inequality
\[1_A*1_{A'_{i-1}}*1_{-(A+A'_{i-1})}(t) \geq \frac{1}{2}|A||A'_{i-1}| >0\]
and so (9) holds. We also have 
\[|A'_{i-1}+k_iT_i|\leq |B_{\delta(1+k\nu)} + B_{\delta(k_i\nu)}| \leq |B_{\delta(1+2k\nu)}|\leq \frac{3}{2}B_{\delta}\leq \frac{2}{\alpha}|A'_0|\leq \frac{2}{\alpha}|A'_{i-1}|.\]
Since by adding $T_i$ $k_i$ times we increase $A'_{i-1}$ by the factor of \(\frac{2}{\alpha}\), there must be an \(0\leq l_i < k_i\) such that
\[|A_{i-1}'+l_iT_i+T_i|\leq \Bigl(\frac{2}{\alpha}\Bigr)^{1/k_i}|A'_{i-1}|\leq \Bigl(\frac{2}{\alpha}\Bigr)^{1/k_i}|A'_{i-1} + l_iT_i|.\]
Define \(A'_i:=A'_{i-1}+l_{i}T_{i}\) so that (7) is satisfied. Moreover, the first part of (5) holds trivially and the second part is true because
\[A_i = A'_0 + l_1T_1+\cdots+l_i T_{i}\subseteq A'_0 + kT_0 \subseteq B_{\delta(1+k\nu)}.\]
Let us also notice that
\[|A+A'_i|\leq |B_{1+\delta(1+k\nu)}|\leq|B_{1+2\delta}|\leq \frac{3}{
2} |B|\leq \frac{2}{\alpha} |A|.\]
Therefore (6) holds and the inductive step is complete.

We now calculate the closed form of the recursive relation (9), making use of the fact that we defined \(A_i'\) so that
\[A_i' = A_{i-1}' + l_{i} T_{i} \subseteq A_{i-1}' + k_i T_i \subseteq 2 A_{i-1}' + A - A - A_{i-1}'.\]
Let \(n_i = (3^i-1)/2\), then by simple induction we have
\[A_i'\subseteq (1+n_i) A_0' - n_i A_0' + n_i (A-A).\]
Since \(A_0'\subseteq A\) we can write it as
\begin{equation*}
\begin{split}
A_i'  &\subseteq (1+3n_i) A - (1+3n_i) A\\
&= n_{i+1} A - n_{i+1}A.    
\end{split}
\end{equation*}

Iterate the above inductive procedure $m$ times to obtain the sets \(T_1, T_2,\cdots, T_m\).
Recall that \(\gamma=2^{1-m}\). For every \(i\geq 1\) we have
\begin{equation*}
\begin{split}
\frac{k_{i}^2}{k_{i-1}}&= \frac{\bigl\lceil\log^{1-2^{-i}\gamma}(2/\alpha)\bigr\rceil^2}{\bigl\lceil\log^{1-2\cdot2^{-i}\gamma}(2/\alpha)\bigr\rceil} \leq 
\frac{\bigl(\log^{1-2^{-i}\gamma}(2/\alpha)+1\bigr)^2}{\log^{1-2\cdot2^{-i}\gamma}(2/\alpha)}\\
&= \log(2/\alpha) + 2\log^{2^{-i}\gamma}(2/\alpha) + \log^{2\cdot2^{-i}\gamma-1}(2/\alpha)\\ &\leq \log(2/\alpha) + 3 \leq 4\log(2/\alpha).
\end{split}
\end{equation*}

Thus we can give the lower bound
\begin{equation*}
\begin{split}
|T_m|&\geq  \exp\bigl((-k_1^2-k_2^2/k_1-k_3^2/k_2-\cdots - k_m^2/k_{m-1})(C\log^2(2/\alpha))\bigr)|T_0|\\
&\geq\exp\bigl(-Cm \log^3(2/\alpha)\bigr)|T_0|.
\end{split}
\end{equation*}
Let us apply Theorem~\ref{thm:croot_sisask_modified} to sets \(A, A'_m, -(A+A'_m), T_m\) with \(\epsilon = 1/2\), making use of the properties (6) and (7). This way we find a Bohr set \(\Tilde{B}\) such that 
\begin{equation*}
\begin{split}
\Tilde{B} &\subseteq A+A'_m-(A+A'_m)\subseteq (2n_{m+1}+1)(A-A) = 3^{m+1}A-3^{m+1}A,\\
\dim \Tilde{B} &= d+d' \leq d + C\log^4(2/\alpha)/k_{m-1} + C\log(1/\sigma),\\
\sigma &= |T_m|/|B_{\delta\nu}|\\ &\geq \exp(-C m\log^3(2/\alpha)) |T_0|/|B_{\delta\nu}|\\ &\geq \exp(-C m\log^3(2/\alpha))\cdot 0.8\alpha .
\end{split}
\end{equation*}
We notice that since \(k_{m-1}\gg \log^{1-2^{1-m}}(2/\alpha)\), by setting \(\gamma = 2^{1-m}\) we have
\[\dim \Tilde{B}\ll \log^{3+\gamma}(2/\alpha) + \log^3(2/\alpha),\]
moreover
\[\rho_{\Tilde{B}} = \rho\delta\nu \frac{(2/\alpha)^{1/2}}{2d^2 d'}\geq\rho\frac{c\alpha^3}{d^5d'},\]
which are the desired bounds.
\end{proof}
Notice that Theorem \ref{thm:bohr_set_in_sumset} works also when $A$ is contained in a translate of a Bohr set \(g+B\), for some \(g\in G\). To see that it is enough to consider \(A-g \subseteq B\).\\
We now show how to use the Bohr set from Theorem~\ref{thm:bohr_set_in_sumset} to obtain a density increment for solution free sets. The strategy is very similar to the one suggested by Schoen and Shkredov~\cite{many}: we observe that certain translates of Bohr sets cannot intersect $A$, as this would lead to a non-trivial solution. By an averaging argument, $A$ must have higher density in the remaining translates.
\begin{lemma}\label{lemma:strong_density_increment}
Let \(m\geq 1\) and \(k\geq 2\cdot3^{m+1}+2\). Let \(A\subseteq B\), where \(|A|=\alpha |B|\) and $B$ is a Bohr set of dimension $d$ and width \(\rho\). Assume that \(|B|\geq \alpha^{-2}\Bigl(\frac{Cd^2}{\alpha}\Bigr)^{3d}\). Suppose that $A$ does not contain any non-trivial solutions to the equation
\[x_1+x_2+\cdots + x_{k-1} = (k-1) x_k.\]
Then, there is a Bohr set \(\Tilde{B}\) of dimension \(d+d'\)and radius \(\rho_{\Tilde{B}}\), such that for some $y$ we have \(|\Tilde{B}\cap (A-y)|\geq 1.01 \alpha|\Tilde{B}|\). It is possible to choose it in such a way that 
\[d'\leq C\log^{3+2^{1-m}}(2/\alpha)\]
and
\[\rho_{\Tilde{B}} \geq \rho\frac{c\alpha^3}{d^5d'}.\]
\end{lemma}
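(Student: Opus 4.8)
The strategy is to combine Theorem~\ref{thm:bohr_set_in_sumset} with the observation (attributed above to Schoen--Shkredov) that a solution-free set forces many translates of a small Bohr set to avoid $A$ entirely. First I would apply Theorem~\ref{thm:bohr_set_in_sumset} to get a Bohr set $\Tilde{B}_0 \subseteq 3^{m+1}A - 3^{m+1}A$ of dimension $d + d'$ with $d' \leq C\log^{3+\gamma}(2/\alpha)$ and width $\rho_{\Tilde{B}_0} \geq \rho\, c\alpha^3/(d^5 d')$, where $\gamma = 2^{1-m}$. The point of the length hypothesis $k \geq 2\cdot 3^{m+1}+2$ is that we can split the $k-1$ ``input'' variables into $2\cdot 3^{m+1}$ of them, grouped into two blocks of $3^{m+1}$, plus possibly a few extra variables which we pin to a single common value. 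Writing the equation as $(x_1 + \cdots + x_{3^{m+1}}) - (x_{3^{m+1}+1} + \cdots + x_{2\cdot 3^{m+1}}) + (\text{rest}) = (k-1)x_k - (\text{rest})$ and choosing the ``rest'' variables equal to each other (and to a translate parameter), any element of $\Tilde{B}_0 + \{\text{shift}\}$ that also lies in a translate of $A$ would manufacture a non-trivial solution. So the plan is: fix a regular dilate, then for each element $t \in \Tilde{B}_0$ of the Bohr set and each translate $y$, if both $y$ and $y+t$ were in $A$ we would get a solution, contradiction — hence $A$ and $A + t$ are disjoint when restricted to a suitable common window.

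Concretely I would pass to a regular sub-Bohr set $\Tilde{B} = (\Tilde{B}_0)_\kappa$ for an appropriate $\kappa \geq c/\dim$ so that Proposition~\ref{prop:bohr_set_convolution} applies, and work inside a fixed coset $y + \Tilde{B}$. The averaging argument is then: by Theorem~\ref{thm:bohr_set_in_sumset}, $\Tilde{B} - \Tilde{B} \supseteq$ (is contained in) the relevant sumset difference, so for each $t \in \Tilde{B}$ the sets $(A - y)\cap \Tilde{B}$ and $(A - y - t)\cap \Tilde{B}$ cannot both contain the ``same'' point, i.e. $1_A * 1_A$ (suitably shifted) vanishes on $\Tilde{B}$ away from a trivial diagonal. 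Quantitatively, $\sum_{t \in \Tilde{B}} 1_A(y) 1_A(y+t) $ counts only trivial coincidences, so $\mathbb{E}_{y}\big[ (\mu_{\Tilde{B}} * 1_A)(y)^2 \big]$ is forced to be small relative to $\mathbb{E}_y (\mu_{\Tilde{B}} * 1_A)(y) = \alpha \cdot \|1_A\|_1 / N$ — this is exactly where the technical size hypothesis $|B| \geq \alpha^{-2}(Cd^2/\alpha)^{3d}$ enters, guaranteeing that $\Tilde{B}$ is large enough (via Proposition~\ref{bohr_set_size_lower_bound}) that the trivial solutions $x_1 = \cdots = x_k$ contribute a negligible fraction. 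A standard second-moment / pigeonhole step then yields a translate $y$ with $|\Tilde{B} \cap (A - y)| \geq (1 + c)\alpha |\Tilde{B}|$; to reach the constant $1.01$ one runs the dichotomy so that either this density increment already holds, or $A$ has density noticeably below $\alpha$ on almost every translate, which via the mean-value identity $\mathbb{E}_y \mu_{\Tilde{B}} * 1_A(y) = \alpha$ again forces a large translate somewhere.

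The main obstacle is bookkeeping the shift and the ``diagonal'' contribution precisely: one must verify that the only $t \in \Tilde{B}$ for which $A \cap (A - t)$ (after the appropriate translation) is nonempty is $t$ corresponding to a trivial solution, and that these trivial solutions number at most $O(|A|)$ rather than $|A|^{k-1}$, so that after dividing by $|\Tilde{B}|$ they vanish in the limit guaranteed by the size hypothesis on $|B|$. A secondary subtlety is that Theorem~\ref{thm:bohr_set_in_sumset} produces a Bohr set inside $3^{m+1}A - 3^{m+1}A$, not inside $A - A$, so the correspondence between ``$t \in \Tilde{B}$'' and ``a non-trivial solution to the $k$-term equation'' must route through writing $t = (a_1 + \cdots + a_{3^{m+1}}) - (b_1 + \cdots + b_{3^{m+1}})$ with $a_i, b_j \in A$ and then absorbing the leftover $k - 2\cdot 3^{m+1} - 1 \geq 1$ variables plus $x_k$ into the translation parameter $y$; keeping track of which coefficients are $+1$ and which are $-1$ (and that their sum is $0$, using invariance) is the fiddly part. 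Once the increment is in hand, the dimension and width bounds are simply inherited from Theorem~\ref{thm:bohr_set_in_sumset} after the regular-dilate step, costing only a factor absorbed into the constants, which gives exactly the claimed $d' \leq C\log^{3+2^{1-m}}(2/\alpha)$ and $\rho_{\Tilde{B}} \geq \rho\, c\alpha^3/(d^5 d')$.
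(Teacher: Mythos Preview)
Your plan has a genuine gap at the step you yourself flag as ``the fiddly part'': the correspondence between $t\in\Tilde{B}_0\subseteq 3^{m+1}A-3^{m+1}A$ and a non-trivial solution of $x_1+\cdots+x_{k-1}=(k-1)x_k$. You write $t=(a_1+\cdots+a_{3^{m+1}})-(b_1+\cdots+b_{3^{m+1}})$ and then want ``$y,y+t\in A$'' to manufacture a solution, but the equation has \emph{only} $+1$ coefficients on the left and a single $(k-1)$ on the right. There is no way to absorb the subtracted block $\sum b_j$ into the left-hand side, and nothing in your setup produces the factor $(k-1)$. Your displayed rearrangement ``$(x_1+\cdots)-(x_{3^{m+1}+1}+\cdots)+(\text{rest})=\cdots$'' is not a rearrangement of the given equation at all.

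The paper closes this gap with two devices you are missing. First, it applies Lemma~\ref{lemma:coefficients} with coefficients $k-1,-1,1$ to extract subsets $A_1,A_2,A_3$ of (a translate of) $A$ so that $(k-1)A_1$ and $-A_2$ sit densely in a common Bohr set $B'$; this is what eventually accounts for the coefficient $k-1$. Second --- and this is the crucial trick --- it does \emph{not} apply Theorem~\ref{thm:bohr_set_in_sumset} to $A$ directly. Instead it finds, by a short averaging argument, a point $x$ and a set $A_3^\ast\subseteq A_3\cap(x-A_3)$ with $|A_3^\ast|\geq 2$ (this is exactly where the hypothesis $|B|\geq \alpha^{-2}(Cd^2/\alpha)^{3d}$ is used), and splits $A_3^\ast$ into $A_3^+,A_3^-$ with $x-A_3^-\subseteq A_3^+$. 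Applying Theorem~\ref{thm:bohr_set_in_sumset} to $A_3^+$ yields $T\subseteq wA_3^+-wA_3^+\subseteq wA_3^++wA_3^- - wx$, i.e.\ a Bohr set inside a \emph{sum} of $2w=k-2$ elements of $A$, up to a fixed shift. Now if $a\in(k-1)A_1$ and $b\in A_2$ satisfied $(k-1)b-a\in T+wx$, one would read off a genuine non-trivial solution; hence for every $y$ at least one of $(y+T_{1/2})\cap(k-1)A_1$ and $(y-T_{1/2})\cap(-A_2)$ is empty, and averaging over $y$ forces a density increment of the other set on some translate of $T_{1/2}$.

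Note also that your proposed endgame does not yield an increment even if the correspondence were valid: if $A\cap(A-t)=\emptyset$ for all non-trivial $t$ in (a dilate of) $\Tilde{B}$, the conclusion is that $|A\cap(y+\Tilde{B})|\leq 1$ for every $y$, which gives an \emph{upper bound} $\alpha\lesssim 1/|\Tilde{B}|$, not a translate on which $A$ has density $\geq 1.01\alpha$. The paper's two-set dichotomy (one of $(k-1)A_1$, $-A_2$ is avoided, so the \emph{other} concentrates) is what actually produces the increment.
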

\begin{proof}
Applying Lemma~\ref{lemma:coefficients} we find a translate of a Bohr \(t+B'\) set of dimension $d$ and radius \(\delta\geq \frac{c\alpha}{d}\) and sets \(A_1,A_2,A_3\subseteq A\),
such that \((k-1)A_1\), \(-A_2\), \(A_3\) have densities at least \(\frac{7}{8}\alpha\) inside \(t+B'\), \(t+B'\), \(t+B'_{\delta_1}\), or there is a density increment \(1+1/48\) on one of these sets. Without loss of generality we assume that \(t=0\), since our equation is translation-invariant. By choosing suitable \(\delta_1, \delta_2\geq \frac{1}{Cd}\) we ensure that \(|B'_{1+2k\delta_1}|\leq 1.01 |B'|\) and \(|B'_{\delta_1+\delta_1\delta_2}|\leq 1.01|B'_{\delta_1}|\).\\
By an averaging argument, we find translate \(t_2+B'_{\delta_1\delta2}\) for \(t_2\in B'_{\delta_1}\), such that \(A'_3 = A_3\cap(t_2+B'_{\delta_1\delta_2})\) has density at least \(0.8\alpha\) inside \(t_2+B'_{\delta_1\delta2}\).
Consider the sum 
\[\sum_{x\in B'_{\delta_1+\delta_1\delta_2}} 1_{A_3}*1_{A_3'}(x) = \sum_{x\in A_3+A_3'}|A_3\cap (x-A_3')| = |A_3||A_3'|.\]
Thus we know that for some \(x\in B'_{\delta_1+\delta_1\delta2}\) we have \[|A_3\cap (x-A_3')|\geq |A_3||A_4|/|B'_{\delta_1+\delta_1\delta2}|\geq \frac{1}{2}\alpha^2|B'_{\delta_1}|.\] Define \(A_3^*:=A_3\cap (x-A_3')\), clearly \(A_3^*\subseteq x+B'_{\delta_1\delta_2}.\)
We construct \(A_{3}^+\) by inserting all elements of \(A_3^*\) to \(A_{3}^+\) unless an element \(a\in A_3^*\) is already in \(x-A_{3}^+\), then we add it to \(A_{3}^-\). Clearly the sizes of \(A_{3}^+\) and \(A_{3}^-\) differ by at most 1 and \(x-A_{3}^-\subseteq A_{3}^+\).\\
To show that \(A_{3}^+\) and \(A_{3}^-\) are non-empty we need to know that the size of \(A_3^*\) is at least 2. Using our assumption on the size of \(|B|\) we have
\[|A_3^*|\geq \frac{1}{2}\alpha^2|B_{\delta\delta_1}|\geq \frac{1}{2}\alpha^2\Bigl(\frac{\alpha}{Cd^2}\Bigr)^{3d}|B|\geq 2.\]
\\ Set \(w=\Bigl\lfloor\frac{k-2}{2}\Bigr\rfloor\). This way we have \(w\geq 3^{m+1}\). We initially assume that $k$ is even and the floor function is unnecessary. 

    Clearly \(A_{3}^+\subseteq B'_{\delta_1}\) and it has density at most \(\frac{1}{4} \alpha^2\), moreover \(wA_{3}^+-wA_{3}^+ \subseteq w A_{3}^++wA_{3}^--wx\).
    By Theorem~\ref{thm:bohr_set_in_sumset} we find a Bohr set \(T\subseteq wA_{3}^+-wA_{3}^+\) of the desired width and dimension.
    Suppose that \(a\in (k-1)A_1\) and \(b\in A_2\). Then we must have \((2w+1)b-a\notin T+wx\) as otherwise we would have \((2w+1)b-a\in w A_{3}^++wA_{3}^-\) and that would mean a non-trivial solution to the equation \[x_1+x_2\cdots + x_{2w+1}=(2w+1)x_{2w+2}.\]
    Consider a larger Bohr set \(B^* = B'_{1+w(\delta_1+\delta_1\delta_2)}\), we have \(|B^*|\leq |B'_{1+2k\delta_1}|\leq 1.01|B'|\). Thus \((k-1)A_1, A_2\) have densities at least \(0.8\alpha\) inside  \(B'\subseteq B^*+wx\).
    
    At this point lets remark what happens if \(k\) is odd. Let $z$ be an arbitrary element of \(A\cap B'_{\delta_1}\) Then instead of \((2w+1)b-a\notin T+wx\) we assert that \((2w+2)b-a\notin T+wx+z\), thus adding one extra variable to our equation, which we set immediately to $z$. If we choose \(B^{**}=B'_{1+(w+1)(\delta_1+\delta_1\delta_2)}\), we still have \(B'\subseteq B^{**}+(w+1)x\) and the rest of the argument remains the same.
    
    By the above observation about non-inclusion we notice that if \(y\in x+B^*\) then either \((y+T_{1/2})\cap ((k-1)A_1)\) or \((y-T_{1/2})\cap (-A_2)\) must be empty. Summing over all such $y$ we have
    \[1.6\alpha|B^*||T_{1/2}|\leq \sum_{y\in x+B^*}|(y+T_{1/2})\cap ((k-1)A_1)| + |(y-T_{1/2})\cap (-A_2)|.\]
    Because one element in the sum is always equal to $0$ we must have some \(y\in x+B^*+wx\) for which 
    \[\frac{1.6\alpha|B^*||T_{1/2}|}{1.01|B^*|}\leq |(y+T_{1/2})\cap ((k-1)A_1)|\]
    or
    \[\frac{1.6\alpha|B^*||T_{1/2}|}{1.01|B^*|}\leq |(y-T_{1/2})\cap (-A_2)|.\]
    That is almost a density increment on a translate of $T_{1/2}$. After multiplying the set of characters of $T_{1/2}$ either by $k-1$ or $-1$ we obtain a density increment of $1.5$ on the resulting Bohr set \(\Tilde{B}\).
\end{proof}
We are now in position to prove Theorem~\ref{thm:strong_no_solutions}.
\begin{proof}
We proceed in a similar way as in the proof of Theorem~\ref{thm:number_of_solutions}. Again we take \(p > (|a_1|+|a_2|+\cdots+|a_n|) N\).
Let $A^{(0)} = A$ and $B^{(0)} = G$. We iterate Lemma~\ref{lemma:strong_density_increment} on these sets, obtaining $(A^{(1)}, B^{(1)})$, \((A^{(2)}, B^{(2)}), \cdots\). We know that after, say, $s$ steps it is no longer possible. That is because the density of \(A^{(i)}\) cannot exceed $1$.
Clearly 
\[s\leq C \log(2/\alpha)\]
and we easily calculate that
\[d_s\leq C\log^{4+\gamma_m}(2/\alpha),\]
\[\rho_s \geq (c\alpha)^{Cs}.\]
The only reason for density increment not possible is that \(|B^{(s)}|< \alpha^{-2}\Bigr(\frac{Cd_s^2}{\alpha}\Bigr)^{3d_s}\). On the other hand we can lower-bound the size of \(B^{(s)}\) by Proposition 3. Comparing the lower-bound and the upper-bound we have
\[(\rho_s/2)^{3d_s} N \leq \alpha^{-2}\Bigr(\frac{Cd_s^2}{\alpha}\Bigr)^{3d_s},\]
which up to a constant the same as
\[\log(N) \leq 3d_s\log\Bigl(\frac{Cd_s^2}{\alpha \rho_s}\Bigr).\]
Substituting the bounds for \(d_s\) and \(\rho_s\) that is equivalent (again up to a constant) to
\[\log(N) \leq C\log^{6+\gamma_m}(2/\alpha).\]
Rearranging, we obtain
\[\alpha \leq e^{-c\log^{1/(6+\gamma_m)} N}.\]
\section{applications}
Theorem \ref{thm:number_of_solutions_bloom} of Bloom has been used in a number of papers employing Fourier Transference Principle. Examples of such results are papers from Prendiville~\cite{prend}, Chow~\cite{chow}, Browning and Prendiville~\cite{browning}. We think that substituting Theorem \ref{thm:number_of_solutions_bloom} by our Theorem \ref{thm:number_of_solutions} for equations of length 4 and more, could bring quantitative improvements. We briefly recall the first of the results~\cite{prend} and state how the bound improves.

Let \(S\in \{1,2,\cdots, N\}\) and suppose that the only solutions to the equation
\[x_1 + y_1 = x_2 + y_2\]
for \(x_1,x_2,y_1,y_2\in S\) are trivial. Then $S$ is called a Sidon set and it is known that \(|S|\leq (1+o(1))N^{1/2}\). The problem of finding solutions to invariant equations in Sidon sets was considered by Conlon, Fox, Sudakov and Zhao~\cite{cfsz}. They give a weak upper bound of \(|S|\leq o(N^{1/2})\), providing a comment about how to use their methods to obtain a stronger bound. Prendiville~\cite{prend} used the method of Fourier Transference Principle to get an improvement on the work of Conlon, Fox, Sudakov and Zhao.
\begin{theorem}(Prendiville)
If \(N\geq 3\) and \(S\subseteq \{1,2,\cdots N\}\) is a Sidon set lacking solutions to an invariant equation \(a_1 x_1+a_2 x_2 + \cdots + a_k x_k=0\) in \(k\geq 5\) variables, we have
\[|S|\leq C N^{1/2}(\log\log N)^{-1}.\]
\end{theorem}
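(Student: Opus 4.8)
The statement is not of the type Theorems~\ref{thm:strong_no_solutions}--\ref{thm:number_of_solutions} handle directly: a Sidon set has vanishing density in $\{1,\dots,N\}$, so one cannot run a density increment on it inside the interval. The plan is to use the Fourier transference principle (Green; Conlon--Fox--Sudakov--Zhao; Prendiville) to replace $S$ by a genuinely \emph{dense} model, and then count solutions in the model by Bloom's Theorem~\ref{thm:number_of_solutions_bloom} --- or, for $k\ge 4$, by Theorem~\ref{thm:number_of_solutions}, whose far better bound is exactly what would strengthen the conclusion. Concretely, fix a prime $p\asymp N$ and embed $\{1,\dots,N\}$ into $\mathbb Z/p\mathbb Z$ so that nontrivial solutions of $a_1x_1+\dots+a_kx_k=0$ in $S$ are preserved and every $a_i$ is invertible mod $p$; write $\alpha:=|S|N^{-1/2}$, so the claim is $\alpha\ll(\log\log N)^{-1}$, which we assume to fail. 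The Sidon hypothesis enters only through the minimal-energy bound $E(S)\le 2|S|^2$, i.e. $\sum_\gamma|\widehat{1_S}(\gamma)|^4\le 2p|S|^2$; equivalently $\mu_S=1_S/|S|$ is $\alpha$-pseudorandom in the sense $\|\widehat{\mu_S}\|_4^4\le 2\alpha^{-2}$. From this I would record: (i) for every $\epsilon>0$ the large spectrum is small, $|\mathrm{Spec}_\epsilon(1_S)|\ll\epsilon^{-4}\alpha^{-2}$, independent of $N$; and (ii) for any nonzero $b_1,\dots,b_4\in\mathbb Z/p\mathbb Z$ one has $\sum_\gamma\prod_{j=1}^4|\widehat{1_S}(b_j\gamma)|\le 2p|S|^2$, by two applications of Cauchy--Schwarz together with the change-of-variables identity $\sum_\gamma|\widehat{1_S}(b\gamma)|^4=\sum_\gamma|\widehat{1_S}(\gamma)|^4$.

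I would then construct the dense model. Let $R:=\mathrm{Spec}_\epsilon(1_S)$; by (i) it is small, and by Propositions~\ref{regular_sub_bohr_set} and~\ref{bohr_set_size_lower_bound} it produces a regular Bohr set $B=\mathrm{Bohr}(R,\rho)$ of dimension $d=|R|\ll\epsilon^{-4}\alpha^{-2}$ with $|B|\ge(\rho/2)^{3d}p$. An averaging step as in Lemma~\ref{lemma:high_density_small_bohr_set} produces a translate $S_0$ of $S$ meeting $B$ with density $\gtrsim\alpha$; then the pseudorandomness of $\mu_S$, together with the fact that every character of $R$ is within $\rho$ of $1$ on $B$, yields a set $Z\subseteq B$ with $|Z|\gtrsim\alpha|B|$ that is itself pseudorandom and whose (suitably rescaled) indicator has Fourier transform matching that of $1_{S_0}$ on $R$ up to $O(\rho|S|)$ and of magnitude $O(\epsilon|S|)$ off $R$. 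Expanding the weighted solution count $\mathcal N(f)=\sum_{a_1x_1+\dots+a_kx_k=0}\prod_{i=1}^k f(x_i)$ over characters and telescoping $\mathcal N(1_{S_0})$ against the rescaled $\mathcal N(1_Z)$ into $k$ terms --- each with one ``difference'' factor of size $O((\epsilon+\rho)|S|)$ and $k-1$ ordinary factors, four of which are absorbed by (ii) (and its analogue for the pseudorandom set $Z$; this is exactly where $k\ge5$ is needed) and the remaining $k-5$ bounded trivially --- shows that $\mathcal N(1_{S_0})$ and $(|S_0|/|Z|)^k\mathcal N(1_Z)$ differ by $O\bigl((\epsilon+\rho)|S|^{k-2}\bigr)$.

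Now $Z$ has density $\gtrsim\alpha$ inside the structure $B$, so Theorem~\ref{thm:number_of_solutions_bloom} furnishes $\mathcal N(1_Z)\ge\exp\bigl(-C\alpha^{-1/(k-2)}\log^4(2/\alpha)\bigr)|B|^{k-1}$; feeding this into the previous paragraph makes $\mathcal N(1_{S_0})$ exceed the bounded number of trivial solutions, producing a nontrivial solution of the equation in $S$ --- a contradiction --- \emph{provided} $\epsilon,\rho$ were taken small enough and $|B|$ neither too small nor too large. This is the quantitative crux: shrinking $\epsilon,\rho$ kills the transference error but, through $d\ll\epsilon^{-4}\alpha^{-2}$, collapses $|B|\ge(\rho/2)^{3d}p$, which must still be a usable power of $N$; in Prendiville's execution one moreover iterates a weak density increment in the manner of Roth, each round costing a polynomial factor of the ambient size. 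Optimising this trade-off forces $\alpha^{-1}$ below a power of $\log\log N$, and with the constants arranged as in~\cite{prend} below $\log\log N$ itself. Replacing Theorem~\ref{thm:number_of_solutions_bloom} by Theorem~\ref{thm:number_of_solutions} here --- its lower bound $\exp(-C\log^7(2/\alpha))$ being vastly larger than Bloom's for small $\alpha$ --- lets $\epsilon$ be only quasipolynomially small in $1/\alpha$ instead of exponentially, which is the mechanism behind the strengthened statement.

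I expect the main obstacle to be precisely the construction and simultaneous control of the model in the middle paragraph: $Z$ must be dense in $B$, Fourier-close to a translate of $S$, \emph{and} pseudorandom, while $B$ must have small enough dimension that $|B|$ stays a workable power of $N$ --- and these requirements pull against one another, so extracting the sharp relation between $\alpha$ and $N$ from them is where the real work lies.
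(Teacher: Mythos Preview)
The paper does not prove this statement: it is quoted as Prendiville's theorem and attributed to \cite{prend}, with no argument supplied. The only proof-related remark is that Prendiville's argument invokes Theorem~\ref{thm:number_of_solutions_bloom} (Bloom) as a black box, and that swapping it for Theorem~\ref{thm:number_of_solutions} yields the stronger Theorem~\ref{thm:sidon_improved}. So there is nothing in the present paper to compare your sketch against beyond that one sentence, and on that level your outline agrees: transference from the sparse Sidon set to a dense object, followed by a solution-counting lemma in the dense object, is exactly the mechanism the paper is pointing to.

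That said, your middle paragraph has a genuine gap. You claim an averaging step ``as in Lemma~\ref{lemma:high_density_small_bohr_set}'' produces a translate $S_0$ of $S$ meeting the Bohr set $B$ with density $\gtrsim\alpha$. But $S$ has density only $\alpha N^{-1/2}$ in $\mathbb{Z}/p\mathbb{Z}$, and averaging $|(x+S)\cap B|$ over $x$ returns exactly $|S||B|/p\sim\alpha N^{-1/2}|B|$, not $\alpha|B|$; Lemma~\ref{lemma:high_density_small_bohr_set} needs $A\subseteq B$ with relative density $\alpha$ to begin with, which you do not have. The dense model in the transference argument is not obtained by restricting $S$ to a Bohr set. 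In Prendiville's execution one works on the whole group: the Sidon hypothesis gives a restriction/$\ell^q$ estimate for $\widehat{1_S}$, and the transference (dense model) lemma then produces a $1$-bounded function $g$ on $\mathbb{Z}/p\mathbb{Z}$ with $\mathbb{E}g\gtrsim\alpha$ whose Fourier transform is uniformly close to that of the normalised weight on $S$. It is to this $g$ (or a level set of it) that the counting theorem is applied, and the comparison of $\mathcal N$ values is between $S$ and $g$, not between $S$ and a subset of a Bohr set. Your Bohr-set scaffolding, and the appeal to Lemma~\ref{lemma:high_density_small_bohr_set}, should be replaced by this direct dense-model step.
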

By inspecting the proof of Prendiville~\cite{prend} we see that we can improve the bound by substituting Theorem \ref{thm:number_of_solutions_bloom}, which is used there, with Theorem \ref{thm:number_of_solutions}. As the result, we can prove the following theorem.
\begin{theorem} \label{thm:sidon_improved}
Let \(S\subseteq \{1,2,\cdots, N\}\) be a Sidon set, which contains no non-trivial solutions to an invariant equation \(a_1 x_1+a_2 x_2 + \cdots + a_k x_k=0\) in \(k\geq 5\) variables. Then we have
\[|S| \leq N^{1/2}\exp(-C(\log\log N)^{1/7}).\]
\end{theorem}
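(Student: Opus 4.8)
The plan is to follow Prendiville's transference argument essentially verbatim, replacing every invocation of Bloom's counting bound (Theorem~\ref{thm:number_of_solutions_bloom}) with the much stronger Theorem~\ref{thm:number_of_solutions}, and then to track how this propagates through the final optimization. Recall the skeleton of Prendiville's proof: one treats $S$ as a subset of $\{1,\dots,N\}$ of density $|S|/N$, passes to $\mathbb{Z}/p\mathbb{Z}$ for a suitable prime $p$, and uses the Sidon property to control the $L^2$-mass of $\widehat{1_S}$ off the trivial frequency; the Fourier Transference Principle then says that the solution count of the invariant equation inside $S$ is comparable to what a random set of the same density would give, \emph{provided} the count in a genuine dense set is not too small. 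This last ``provided'' is exactly where Bloom's bound $\exp(-C\alpha^{-1/(k-2)}\log^4(2/\alpha))N^{k-1}$ enters, and it is the bottleneck: its dependence on $\alpha^{-1/(k-2)}$ forces $\alpha$ (hence $|S|/N$) to be only slightly below $N^{1/2}/N$, yielding the $(\log\log N)^{-1}$ savings.

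First I would set $\alpha = |S|/(CN^{1/2})$ or the analogous ratio used in~\cite{prend}, so that the comparison between $S$ and the dense-set count is meaningful; the Sidon condition gives $\sum_{\gamma \neq 0} |\widehat{1_S}(\gamma)|^2 \ll N|S| \ll N^2\alpha^2 \cdot (\text{something})$, which is the input to the transference step. Next I would run the transference inequality: the number of solutions in $S$ to $a_1x_1 + \cdots + a_kx_k = 0$ is at least $(\text{dense count at density } \alpha') - (\text{error controlled by the Sidon } L^2 \text{ bound})$, where $\alpha'$ is the relative density of $S$ inside the model set. Since a Sidon set has at most $\ll N$ trivial solutions (here ``trivial'' in the Sidon sense, roughly $|S|^{k/2}$ or fewer genuine near-trivial tuples — whatever the precise accounting in~\cite{prend} is), for $S$ to lack non-trivial solutions we need the dense count to be dominated by this trivial quantity. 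Plugging in Theorem~\ref{thm:number_of_solutions}, the dense count is $\exp(-C\log^7(2/\alpha'))N^{k-1}$, so the inequality forcing no non-trivial solutions becomes, after taking logarithms, $\log^7(2/\alpha') \gg \log N$, i.e. $\alpha' \gg \exp(-c(\log N)^{1/7})$ fails — one gets $\alpha' \leq \exp(-c(\log N)^{1/7})$. Unwinding $\alpha'$ back to $|S|/N^{1/2}$ and being slightly careful that the relevant logarithm is $\log N$ versus $\log\log N$ (the density is already of size $N^{-1/2}$, so $2/\alpha$ is polynomial in $N$ and $\log(2/\alpha) \asymp \log N$; but the \emph{savings} multiplies $N^{1/2}$, and in Prendiville's setup the effective ``$N$'' for the counting lemma is a windowed/localized piece of size a power of the original, so $\log$ of its density is $\asymp \log\log N$) gives $|S| \leq N^{1/2}\exp(-C(\log\log N)^{1/7})$.

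The main obstacle I anticipate is bookkeeping the localization in Prendiville's argument correctly: the Fourier transference is not applied to $S$ inside $\{1,\dots,N\}$ directly but inside a Bohr set or arithmetic-progression window whose size is roughly $N^{c}$ for some $c$, and it is the density relative to \emph{that} window that feeds into the counting lemma. Getting the exponent $1/7$ rather than $1/7$ composed with some other power — and ensuring the $\log\log N$ rather than $\log\log\log N$ — requires matching Prendiville's choice of parameters to the shape $\exp(-C\log^7(2/\alpha))$ instead of $\exp(-C\alpha^{-1/(k-2)}\log^4(2/\alpha))$. Concretely: in Bloom's bound the dominant term in the exponent is the $\alpha^{-1/(k-2)}$, which for the transference to close needs $\alpha^{-1/(k-2)} \ll \log(\text{window size})$, whence $\alpha \gg \log^{-(k-2)}(\cdots)$ and only a $(\log\log N)^{-1}$-type gain; with Theorem~\ref{thm:number_of_solutions} the exponent is purely a power of $\log(2/\alpha)$, so the constraint is $\log^7(2/\alpha) \ll \log(\text{window})$, giving the genuinely sub-polynomial-in-$\log$ saving $\exp(-C(\log\log N)^{1/7})$. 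Everything else — the passage to $\mathbb{Z}/p\mathbb{Z}$, the Sidon $L^2$ estimate, the handling of $k \geq 5$ so that the trivial-solution count is negligible — is unchanged from~\cite{prend} and I would simply cite it.
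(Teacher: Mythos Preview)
Your proposal is correct and takes exactly the same approach as the paper: the paper's own ``proof'' is nothing more than the sentence ``By inspecting the proof of Prendiville~\cite{prend} we see that we can improve the bound by substituting Theorem~\ref{thm:number_of_solutions_bloom}, which is used there, with Theorem~\ref{thm:number_of_solutions}.'' You have in fact supplied considerably more detail about the bookkeeping than the paper does.
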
 

\end{proof}

\bigskip

\textsc{Faculty of Mathematics and Computer Science, Adam Mickiewicz University, Umultowska 87, 61-614 Poznan, Poland }

\textit{Email address:} tomasz.kosciuszko@amu.edu.pl

\begin{thebibliography}{9}
\bibitem{behrend}
F. A. Behrend, On the sets of integers which contain no three in arithmetic progression. Proc. Nat. Acad. Sci., 23:331–332 (1946).
\bibitem{bloom}
 T. F. Bloom, Translation invariant equations and the method of Sanders. Bull. Lond. Math. Soc. 44 (2012), no. 5, 1050–1067.
\bibitem{browning}
T. Browning, S. Prediville, A Transference Approach to a Roth-Type Theorem in the Squares. International Mathematics Research Notices, Volume 2017, Issue 7, April 2017, Pages 2219–2248 (2016).
\bibitem{chow}
S. Chow, Roth-Waring-Goldbach. International Mathematics Research Notices, Volume 2018, Issue 8, April 2018, Pages 2341–2374 (2017).
\bibitem{cfsz}
D. Conlon, J. Fox, B. Sudakov, Y. Zhao, The regularity method for graphs with few 4-cycles, J. London Math. Soc. 104; 2376-2401 (2021).
\bibitem{kelley}
Z. Kelley and R. Meka, Strong bounds for 3-progressions. arXiv:2302.05537.
 \bibitem{transference}
S. Prendiville, Four variants of the Fourier-analytic transference principle. Online Journal of Analytic Combinatorics (2015)
\bibitem{prend}
 S. Prendiville, Solving equations in dense Sidon sets. Math. Proc. Cambridge Philos. Soc. 173 (2022), no. 1, 25–34.
\bibitem{roth}
K. F. Roth, On certain sets of integers. J. London Math. Soc. 28 (1953): 104–109. 1, 3.
\bibitem{chang}
  T. Sanders, Additive structures in sumsets. Math. Proc. Cambridge Philos. Soc. 144 (2008), no. 2, 289–316.
\bibitem{sanders_revisited}
T. Sanders, The structure theory of set addition revisited. Bulletin of the American Mathematical Society 50 (2012).
\bibitem{many}
T. Schoen, I.D. Shkredov, Roth's theorem in many variables, Isr. J. Math. 199 (2014) 287–308.
\bibitem{schoen} 
T. Schoen and O. Sisask, Roth's theorem for four variables and additive structures in sums of sparse sets. Forum Math. Sigma 4 (2016) e5, 28.
\bibitem{taovu}
T. Tao and V. H. Vu, Additive Combinatorics. Cambridge University Press (2006).
\bibitem{behrend_tao}
T. Tao, The ergodic and combinatorial approaches to Szemerédi's theorem, preprint (2006).
\bibitem{vdw}
B. L. van der Waerden, Beweis einer Baudetschen Vermutung. Nieuw Arch. Wisk., 15 (1927), pp. 212-216.
\end{thebibliography}
\end{document}